\newcommand{\w}{\omega}
\renewcommand{\H}{\mathcal{H}}
\newcommand{\E}{\mathcal{E}}
\renewcommand{\P}{\mathcal{P}}
\newcommand{\G}{\Gamma}
\newcommand{\Gbar}{\bar{G}}
\newcommand{\Hbar}{\bar{H}}
\newcommand{\C}{\mathcal{C}}
\newcommand{\V}{\mathcal{V}}
\newcommand{\Ltilde}{\tilde{L}}
\newcommand{\Z}{\mathbb{Z}}
\newcommand{\Sp}{{\rm Sp}}
\newcommand{\PSp}{{\rm PSp}}
\newcommand{\Gal}{\mathop{\rm Gal}}
\newcommand{\Sym}{\mathop{\rm Sym}}
\newcommand{\Out}{\mathop{\rm Out}}
\renewcommand{\a}{\alpha}
\renewcommand{\b}{\beta}
\newcommand{\iso}{\cong}
\newcommand{\sset}{\subseteq}
\newcommand{\PP}{\mathbb{P}}
\newtheorem{theorem}{Theorem}
\newtheorem{lemma}[theorem]{Lemma}
\theoremstyle{remark}
\newtheorem*{remark}{Remark}
\newtheorem*{remarks}{Remarks}
\begin{document}
\title{Monodromy groups of Hurwitz-type problems}
\author{Daniel Allcock}
\address{Department of Mathematics\\University of Texas, Austin}
\email{allcock@math.utexas.edu}
\urladdr{http://www.math.utexas.edu/\textasciitilde allcock}
\author{Chris Hall}
\address{Department of Mathematics\\University of Michigan, Ann Arbor}
\email{hallcj@umich.edu}
%
\thanks{First author partly supported by NSF grant DMS-0600112.}
\subjclass[2000]{14D05, 14H30, 20B25, 57M10}
\date{March 2, 2008}

\begin{abstract}
We solve the Hurwitz monodromy problem for degree-4 covers.  That is, the
Hurwitz space $\H_{4,g}$ of all simply branched covers of $\PP^1$ of degree
$4$ and genus $g$ is an unramified cover of the space $\P_{2g+6}$ of
$(2g+6)$-tuples of distinct points in $\PP^1$.  We determine the monodromy
of $\pi_1(\P_{2g+6})$ on the points of the fiber.  This turns out to be the
same problem as the action of $\pi_1(\P_{2g+6})$ on a certain local system
of $\Z/2$-vector spaces.  We generalize our result by treating the
analogous local system with $\Z/N$ coefficients, $3\nmid N$, in place of
$\Z/2$.  This in turn allows us to answer a question of Ellenberg
concerning families of Galois covers of $\PP^1$ with deck group
$(\Z/N)^2{:}S_3$.
\end{abstract}

\maketitle

A ramified cover $C$ of $\PP^1$ of degree $d$ is said to have simple
branching if the fiber over every branch point has $d-1$ distinct points. 
Another way to say this is that for each branch point $p$, the permutation
of the sheets of the cover induced by a small loop around $p$ is a
transposition, i.e., a permutation of cycle-shape $21\dots1$.  An Euler
characteristic argument (or the Hurwitz formula) shows that the number of
branch points is $b:=2g+2d-2$, where $g$ is the genus of $C$.  Let
$\H_{d,g}$ be the Hurwitz space, consisting of all such covers, up to
isomorphism as covers.  This is an irreducible smooth algebraic variety. 
There is an obvious map from $\H_{d,g}$ to the space $\P_b$ of unordered
$b$-tuples of distinct points in $\PP^1$.  This is an unramified cover, so
it induces a homomorphism from $G:=\pi_1(\P_b)$ to the symmetric group on
the points of a fiber.  We determine the image in the case $d=4$;    this
answers this case of a question posed explicitly in \cite{EEHS} and
implicit in earlier work.  We call this image $G_2$; the subscript reflects
that this turns out to be the case $N=2$ of a more general construction
considered below.

Our formulation of the problem reflects its topological nature, but
usually one thinks of $\H_{d,g}$ and $\P_b$ as irreducible algebraic
varieties, so that the function field of $\H_{d,g}$ is a finite
extension of that of $\P_b$.  Then $G_2$ is the Galois group of the
associated Galois extension. Even the degree of this extension was unknown.

\begin{theorem}
\label{thm-hurwitz-monodromy}
Let $g>1$.  Then  monodromy group $G_2$ of $\H_{4,g}\to\P_{b=2g+6}$ fits
into the split exact sequence
\begin{equation}
\label{eq-wreath-product}
1
\to
\prod_\Omega\Sp(2g+2,\Z/2)
\to
G_2
\to
\PSp(2g+4,\Z/3)
\to
1,
\end{equation}
where $\Omega=\PP^{2g+3}(\Z/3)$ and $\PSp(2g+4,\Z/3)$ permutes the factors of
the product in the obvious way.
\end{theorem}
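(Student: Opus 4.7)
The plan is to reformulate the fiber of $\H_{4,g}\to\P_{2g+6}$ via the classical Recillas trigonal construction, identifying a simply-branched degree-$4$ cover $C\to\PP^1$ branched at $B$ with a pair $(D,\phi)$, where $D\to\PP^1$ is a simply-branched cubic with branch locus $B$ and $\phi\in H_1(D,\Z/2)\setminus\{0\}$ (equivalently, a connected unramified double cover of $D$). The monodromy of $\pi_1(\P_{2g+6})$ on the Hurwitz fiber then decomposes into two tiers: the action on the finite set of degree-$3$ covers sharing branch locus $B$, and the action on the mod-$2$ first homology of each fixed cubic.

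For the top tier, let $D'_B\to\PP^1$ denote the unique double cover branched at $B$, which has genus $g+2$. An $S_3$-Galois cover of $\PP^1$ with simply-branched cubic quotient factors through $D'_B$ via the sign character of $S_3$ and becomes an unramified cyclic triple cover of $D'_B$, classified by a non-trivial homomorphism $\pi_1(D'_B)\to\Z/3$ on which the hyperelliptic involution acts by $-1$. Up to isomorphism of $S_3$-cover, these data are parametrized by $\Omega=\PP(H_1(D'_B,\Z/3))$, so the fiber of $\H_{3,g+1}\to\P_{2g+6}$ over $B$ is $\Omega$; the Weil pairing on $H_1(D'_B,\Z/3)$ is preserved, so the monodromy permutes $\Omega$ symplectically. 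By the classical Picard--Lefschetz computation for the hyperelliptic family $D'\to\P_{2g+6}$, this image is the full $\Sp(2g+4,\Z/3)$ for $g>1$, and projectivizing yields the surjection $G_2\twoheadrightarrow\PSp(2g+4,\Z/3)$.

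Let $K$ be the kernel. A loop in $K$ fixes every $\omega\in\Omega$, hence lifts along $\H_{3,g+1}\to\P_{2g+6}$ to a loop in $\H_{3,g+1}$ based at each $D_\omega$, and thereby acts on $H_1(D_\omega,\Z/2)$ preserving the intersection form. By Recillas this is exactly the $K$-action on the Hurwitz fiber over $B$, giving a map $K\to\prod_\Omega\Sp(2g+2,\Z/2)$. Each single-factor projection is surjective by the well-known fact that the mod-$2$ monodromy of the universal simply-branched cubic over $\H_{3,g+1}$ is the full symplectic group when $g>1$. To upgrade to surjectivity onto the full product, I would use that $\Sp(2g+2,\Z/2)$ is simple for $g>1$, that $\PSp(2g+4,\Z/3)$ permutes $\Omega$ transitively, and that the map is $\PSp(2g+4,\Z/3)$-equivariant: a proper normal, permutation-invariant subgroup surjecting onto each factor would have to be the graph of a system of isomorphisms of factors, and I would rule this out by producing, for each $\omega$, Picard--Lefschetz vanishing cycles (from small collisions of branch points in $B$) that act non-trivially on the $\omega$-factor and trivially on the others.

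Finally, the sequence must be shown to split. One approach is a group-cohomology vanishing argument via Shapiro's lemma, reducing the relevant $H^2$ to that of a point-stabilizer in $\PSp(2g+4,\Z/3)$ with coefficients related to $\Sp(2g+2,\Z/2)$ (using simplicity and the small Schur multiplier of $\Sp(2g+2,\Z/2)$ for $g>1$); alternatively one may attempt a geometric section using a distinguished family of cubics over a Zariski-open in $\P_{2g+6}$. The main obstacle is the kernel analysis: showing the image is the \emph{full} product rather than a proper subdirect product. Separating the monodromy action on distinct $\omega$-factors is the delicate step, since the surjectivity of each individual factor and the identification of the top quotient are both classical monodromy computations.
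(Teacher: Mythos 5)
Your overall architecture is the same as the paper's: the resolvent tower $C\mapsto(C_3,\text{double cover of }C_3)$, the top quotient $\PSp(2g+4,\Z/3)$ acting on $\Omega=\PP H^1(C_2;\Z/3)$, and a Goursat-type analysis of the kernel $K$ inside $\prod_\Omega\Sp(2g+2,\Z/2)$. But two steps have genuine gaps. First, your single-factor surjectivity claim cites the wrong group: the ``full mod-$2$ monodromy of the universal cubic over $\H_{3,g+1}$'' is the image of the \emph{stabilizer} $H$ of one point $\w\in\Omega$ (i.e.\ of $\pi_1(\H_{3,g+1})$), whereas you need surjectivity of $K$, which fixes \emph{every} point of $\Omega$ and is a proper normal subgroup of $H$. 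The fix is the paper's: the image of $K$ is normal in $\Sp(2g+2,\Z/2)$ and contains a transvection (e.g.\ $\b_{b-1}^3$, the cube of a half-twist whose $\Z/3$-vanishing cycle is not the line $\w$), hence by simplicity is everything. Second, your plan to rule out proper subdirect products by exhibiting, for each $\w$, a vanishing-cycle loop acting nontrivially on the $\w$-factor and \emph{trivially on all others} will not work as stated: the cube of a half-twist acts as a transvection on almost every factor (it is trivial only on the factors indexed by its own $\Z/3$-vanishing line), and an element supported on a single factor is essentially what the theorem asserts exists. What suffices, and is much cheaper, is Goursat plus the primitivity of $\PSp(2g+4,\Z/3)$ on $\Omega$ (point stabilizers are maximal parabolics), which forces the block partition to be all singletons or a single diagonal block; the diagonal is then killed by one element of $K$ that is trivial on one factor and nontrivial on another, namely $\b_1^3$ (trivial on $V(C_6)$ when $\w$ is the transvection line of $\b_1$) versus its conjugate $\b_{b-1}^3$ (a transvection on the same factor).

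On splitting, the Schur multiplier is not the relevant invariant: it controls central extensions \emph{of} the kernel, not extensions \emph{by} a nonabelian kernel. The clean argument is that $K_2=\prod_\Omega\Sp(2g+2,\Z/2)$ is centerless and $\Out(\Sp(2g+2,\Z/2))=1$ for $g>1$, so extensions of $\PSp(2g+4,\Z/3)$ by $K_2$ are classified by the induced homomorphism to $\Out(K_2)=\Sym(\Omega)$; since the visibly split wreath-type extension induces the same permutation homomorphism, $G_2$ must be that split extension. Your alternative of a geometric section over a Zariski-open subset would not by itself split the sequence of fundamental-group images. With these three repairs your outline coincides with the published proof.
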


\begin{remark}
The $g=0,1$ cases are exceptional.  If $g=0$ then the left term of
\eqref{eq-wreath-product} should be $3^{40}{:}2^{16}$ instead of
$S_3^{40}$, and the sequence is nonsplit.  If $g=1$ then the left term
should be $A_6^{364}{:}2^{168}$ rather than $S_6^{364}$, and we did not
determine whether the sequence splits.  (We use ATLAS notation for group
structures \cite{ATLAS}.)
\end{remark}

The fact that $G_2$ lies in a group fitting into an exact sequence
like \eqref{eq-wreath-product} is due to Eisenbud, Elkies, Harris and Speiser
\cite{EEHS}; see also \cite{Cohen} and \cite{M}.  So our result says
that $G_2$ is as large as possible.  In section~\ref{sec-thm-1} we
will review what we need from \cite{EEHS} and then prove the theorem.

\medskip
In section~\ref{sec-thm-2} we treat two generalizations of this that are
similar to each other.  The degree-4 Hurwitz monodromy problem is very
closely related to a certain local system of $\Z/2$-vector spaces over
$\P_b$.  Namely, $\H_{3,g+1}$ is also an unramified cover of $\P_b$, and
over $\H_{3,g+1}$ there is a universal family $\C_{3,g+1}$ of simply
branched 3-fold covers of $\PP^1$.  (Existence of this family is not hard
to see, and is proven in great generality in \cite{fulton}.)  We write
$\pi$ for the composition $\C_{3,g+1}\to\H_{3,g+1}\to\P_b$.  If $N\geq 0$,
then we consider the sheaf $\V_N:=R^1\pi_*(\Z/N)$ on $\P_b$, which we
recall is the sheaf associated to the presheaf
$U\mapsto H^1(\pi^{-1}(U);\Z/N)$; the case $N=0$ corresponds to $\Z$
coefficients.  $\V_N$ is a local system of $\Z/N$-modules equipped with
symplectic forms; the fiber over a point $p=(p_1,\dots,p_b)\in\P_b$ is
$H^1(\pi^{-1}(p),\Z/N)$, which is the direct sum of the $H^1(C;\Z/N)$ as
$C$ varies over the points of $\H_{3,g+1}$ lying above $p$.   As we explain
in section~\ref{sec-thm-1}, the monodromy of $\pi_1(\P_b)$ on $\V_2$ is
exactly the Hurwitz monodromy group in degree~4, which we called $G_2$.  So
we define $G_N$ as  the monodromy group on $\V_N$.  We have completely
determined $G_N$ when $3\nmid N$, except for the cases $g=0$ or $1$ and the
question of whether the exact sequence \eqref{eq-foo} below splits.

\begin{theorem}
\label{thm-2}
Suppose $3\nmid N$ and  $g\geq0$ ($\,g>1$ if $N$ is even).  Then the
monodromy group $G_N$ of $\V_N$  fits into an exact sequence
\begin{equation}
\label{eq-foo}
1
\to
\prod_\Omega\Sp(2g+2,\Z/N)
\to
G_N
\to
\PSp(2g+4,\Z/3)
\to
1,
\end{equation}
where $\Omega$ and the action of $\PSp(2g+4,\Z/3)$ are as in
theorem~\ref{thm-hurwitz-monodromy}.
\end{theorem}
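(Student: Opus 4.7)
The plan is to follow the same three-step structure as in Theorem~\ref{thm-hurwitz-monodromy}: establish an ambient containment, show surjectivity onto $\PSp(2g+4,\Z/3)$, and identify the kernel. Since $h\colon\H_{3,g+1}\to\P_b$ is finite \'etale and $\pi=h\circ q$ with $q\colon\C_{3,g+1}\to\H_{3,g+1}$, one has $\V_N = h_\ast\mathcal{W}_N$, where $\mathcal{W}_N := R^1 q_\ast(\Z/N)$ is a rank-$(2g+2)$ symplectic local system on $\H_{3,g+1}$. This places $G_N$ inside the wreath product $\Sp(2g+2,\Z/N)\wr_\Omega \mathrm{Mon}(h)$, and the EEHS-type analysis reviewed in section~\ref{sec-thm-1} refines $\mathrm{Mon}(h)$ to land in $\PSp(2g+4,\Z/3)$, yielding
\[
G_N \subseteq \Sp(2g+2,\Z/N) \wr_\Omega \PSp(2g+4,\Z/3).
\]

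For the quotient map $G_N\to\PSp(2g+4,\Z/3)$, the image depends only on the permutation of sheets of $h$ and so is independent of $N$. Theorem~\ref{thm-hurwitz-monodromy} (together with the Remark that handles $g=0,1$, where the left term changes but $\mathrm{Mon}(h)$ does not) identifies $\mathrm{Mon}(h)$ with $\PSp(2g+4,\Z/3)$ for all $g\geq 0$, and surjectivity of $G_N\to\PSp(2g+4,\Z/3)$ follows in every case of Theorem~\ref{thm-2}.

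The heart of the argument is to identify the kernel $K := \ker(G_N\to\PSp(2g+4,\Z/3))$ with $\prod_\Omega\Sp(2g+2,\Z/N)$. First I would show that the image $H$ of $K$ in one factor is all of $\Sp(2g+2,\Z/N)$; this is precisely the monodromy of $\mathcal{W}_N$ on $\H_{3,g+1}$. By the Chinese Remainder Theorem it suffices to treat $N = p^k$ with $p\neq3$. For $p=2$, mod-$2$ reduction combined with Theorem~\ref{thm-hurwitz-monodromy} supplies $\Sp(2g+2,\Z/2)$, and one lifts to $\Z/2^k$ using Picard-Lefschetz transvections along boundary divisors of $\H_{3,g+1}$ together with the fact that the kernel of $\Sp(2g+2,\Z/2^k)\to\Sp(2g+2,\Z/2)$ is a $2$-group generated by such lifts. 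For $p\nmid 6$, loops around the discriminant in $\H_{3,g+1}$ produce symplectic transvections along vanishing cycles in $H^1(C;\Z/p)$, and a standard transvection-counting argument gives $H\bmod p = \Sp(2g+2,\Z/p)$; the lift to $\Z/p^k$ is then routine.

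Finally, with $H=\Sp(2g+2,\Z/N)$ in hand, I would pass from ``one factor'' to ``the full product'' via Goursat's lemma: the primitivity of $\PSp(2g+4,\Z/3)$ acting on $\Omega$ (a rank-$3$ permutation action, with orbits on ordered pairs given by ``equal'', ``symplectically orthogonal'', ``non-orthogonal'') forces any $G_N$-invariant Goursat partition of $\Omega$ to be trivial or total, and the total (full-diagonal) case is excluded by exhibiting a Picard-Lefschetz transvection in $K$ supported on a single factor of $\prod_\Omega\Sp(2g+2,\Z/N)$. The main obstacle is the Picard-Lefschetz step for odd primes $p\neq3$: one must produce enough vanishing cycles in the universal genus-$(g+1)$ trigonal family over $\H_{3,g+1}$ to generate $\Sp(2g+2,\Z/p)$, and this is a genuinely geometric computation that does not follow formally from Theorem~\ref{thm-hurwitz-monodromy}.
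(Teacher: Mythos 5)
Your proposal has the right skeleton (ambient wreath-product containment, surjectivity onto $\PSp(2g+4,\Z/3)$, identification of the kernel), but two of its load-bearing steps are genuine gaps, and in both cases the paper's argument is substantively different. First, the single-factor surjectivity: you reduce to prime powers and, for $p\nmid 6$, appeal to ``a standard transvection-counting argument'' to show that vanishing-cycle transvections generate $\Sp(2g+2,\Z/p)$ --- and you yourself flag this as the main obstacle. The paper closes exactly this gap with one input (Lemma~\ref{lem-foo-1}): the braid subgroup $\langle\b_3,\dots,\b_{b-1}\rangle\sset H$ acts on $V_0\iso\Z^{2g+2}$ by transvections in primitive vectors whose braiding pattern forces the representation to coincide with A'Campo's, whose image contains the level-$2$ congruence subgroup of $\Sp(2g+2,\Z)$; combined with surjectivity mod~$2$ (Lemma~\ref{lem-surjects-to-factors}) this gives surjectivity onto $\Sp(2g+2,\Z)$, hence onto $\Sp(2g+2,\Z/N)$ for every $N$ at once. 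This also replaces your unsubstantiated $2$-adic claim that the kernel of $\Sp(2g+2,\Z/2^k)\to\Sp(2g+2,\Z/2)$ is generated by lifts of the available transvections. Note too that what is needed is surjectivity of the image of $K$, not of $H$: the monodromy of your $\mathcal{W}_N$ is the image of the stabilizer $H$, and the paper passes from $H$ to its normal subgroup $K$ by observing that $K$ contains the transvection $\b_{b-1}^{3m}$ with $3m\equiv1\pmod N$ (Lemma~\ref{lem-foo-2}).

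Second, and more seriously, your Goursat-plus-primitivity dichotomy (``any invariant partition is trivial or total, and the diagonal case is excluded by one element'') is only valid when the factors are nonabelian simple, which $\Sp(2g+2,\Z/p^n)$ is not: a subgroup of $\prod_\Omega\Sp(2g+2,\Z/p^n)$ surjecting onto every factor can be a fiber product over a common proper quotient without being diagonal with respect to any partition of $\Omega$, so the argument as stated does not close. The paper's proof is a two-stage refinement. For $N=p$ it applies the simple-group lemma to the quotient $\prod_\Omega\PSp(2g+2,\Z/p)$ (this is where the hypothesis $g>1$ for $N$ even enters, to make $\PSp(2g+2,\Z/2)$ simple) and then lifts to $\Sp$ using the nonsplitness of the central extension $\Sp(2g+2,\Z/p)\to\PSp(2g+2,\Z/p)$. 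For $N=p^n$ with $n>1$ it inducts on $n$: the level-$p^{n-1}$ congruence kernel $\G\iso\mathfrak{sp}(2g+2,\Z/p)$ is an irreducible $\Sp(2g+2,\Z/p)$-module when $p>2$, so $\prod_\Omega\G$ is a sum of pairwise nonisomorphic irreducibles and any invariant subgroup projecting onto each factor is everything; for $p=2$ a separate argument is needed, using that the unique invariant line in $\G$ has no invariant complement. None of this structure is present in your outline, and it cannot be bypassed by citing primitivity alone.
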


\noindent{\bf Question.\ }{\it What happens if $3|N$?\/} The most extreme
case is $G_0$, the case of integer coefficients, which determines $G_N$ for
all $N$.  The congruence subgroup property of $\Sp(2g,\Z)$ probably reduces
this to the determination of $G_{3^n}$ for all $n$.  But the congruence
subgroup property requires $g>1$, so it would only apply for $b\geq8$.

\medskip
Finally, we use theorem~\ref{thm-2} to answer a question of Ellenberg
\cite{Ellenberg}, which we motivate by reinterpreting the Hurwitz monodromy
problem.  If $C\to\PP^1$ is connected and simply branched of degree~4, then
its associated Galois cover has deck group $S_4$.  The Hurwitz monodromy
can be regarded as the action of $\pi_1(\P_b)$ on the family of all Galois
covers of $\PP^1$ that have deck group $S_4$ and satisfy a condition which
is a rephrasing of the simple branching of $C\to\PP^1$.  What makes the
degree-4 case special is that $S_4$ is solvable: it is a semidirect product
$2^2{:}S_3$.  Ellenberg essentially asked: what happens when the $2^2$ is
replaced by the elementary abelian group $p^2$ for some prime $p>3$?  We
show that the resulting monodromy group fits into a split exact sequence
like \eqref{eq-wreath-product}, with $\Z/2$ replaced by $\Z/p$.

Here is a precise formulation of his question, in a more general
context.  Let $X_N$ be the semidirect product $N^2{:}S_3$, with $S_3$
acting by permuting triples of elements of $\Z/N\Z$ with sum $0$.
Consider Galois covers of $\PP^1$ with Galois group $X_N$ and $b$
branch points, such that the small loops around them correspond to
involutions in $X_N$.  When $N$ is even we require further that these
involutions have nontrivial image in $S_3$.  Let $\E_N$ be the set of
isomorphism classes of such covers; this is a local system of finite
sets over $\P_b$, and Ellenberg's question can be phrased: what is the
image $\Gbar_N$ of the monodromy action of $G=\pi_1(\P_b)$ on a fiber
of $\E_N$?  This type of problem was considered by Biggers and Fried
\cite{BF}, who showed that $\Gbar_N$ is transitive on the fiber, so
$\E_N$ is irreducible.  We can go further: for $N$ prime to $3$, we
have completely determined the structure of $\Gbar_N$, except for
$b=4$ or $6$ when $N$ is even.   Theorem~\ref{thm-2} fairly easily implies
the following theorem:

\begin{theorem}
\label{thm-GN}
Suppose $3\nmid N$ and  $b>4$ ($\,b>8$ if $N$ is even).  Then the monodromy
group $\Gbar_N$ of $\E_N\to\P_b$ fits into the split exact sequence
\begin{equation}
\label{eq-GN}
1
\to
\prod_\Omega\PSp(b-4,\Z/N)
\to
\Gbar_N
\to
\PSp(b-2,\Z/3)
\to
1,
\end{equation}
where $\Omega=\PP^{b-3}(\Z/3)$ and  $\PSp(b-2,\Z/3)$ permutes the
factors of the product in the obvious way.
\end{theorem}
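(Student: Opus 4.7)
The plan is to realize $\E_N$ as (an open subset of) $\V_N$ modulo the fibrewise scalar action of $\{\pm 1\}$, and then apply theorem~\ref{thm-2}. Given an $X_N$-Galois cover $Y\to\PP^1$ representing a point of $\E_N|_p$, form the quotient $\tilde C := Y/N^2$, an $S_3$-Galois cover of $\PP^1$. The hypothesis that local monodromies around the branch points are involutions with nontrivial $S_3$-image (the latter being automatic for odd $N$) forces $\tilde C\to\PP^1$ to be simply branched at all $b$ points, so $C := \tilde C/(\Z/2)$ is a simply-branched degree-$3$ cover, defining a point of $\H_{3,g+1}$ above $p$ with $g+1=(b-4)/2$. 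The remaining datum---an $S_3$-equivariant unramified $N^2$-cover $Y\to\tilde C$---amounts to a surjective $S_3$-equivariant homomorphism $\phi\colon H_1(\tilde C;\Z/N)\twoheadrightarrow N^2$, where $N^2$ carries the standard $S_3$-representation structure (triples in $(\Z/N)^3$ summing to $0$).

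The next step is a linear-algebra identification. Using the short exact sequence $0\to N^2\to\mathrm{Ind}_{\Z/2}^{S_3}(\Z/N)\to\Z/N\to 0$ of $\Z/N[S_3]$-modules, Frobenius reciprocity, and the vanishing $H^1(\PP^1;\Z/N)=0$, one obtains a natural isomorphism
\begin{equation*}
\mathrm{Hom}_{S_3}\bigl(H_1(\tilde C;\Z/N),\,N^2\bigr)\iso H^1(C;\Z/N).
\end{equation*}
Surjectivity of $\phi$ cuts out the open subset $H^1(C;\Z/N)^{\ast}$ of elements nonvanishing modulo every prime dividing $N$. The outer automorphism of $X_N$ given by $(n,s)\mapsto(-n,s)$ (outer whenever $N>2$, trivial for $N=2$) acts on $(C,\phi)$ by $\phi\mapsto-\phi$ and accounts for the sole isomorphism ambiguity not already visible in $(C,\phi)$. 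This yields a $\pi_1(\P_b)$-equivariant identification
\begin{equation*}
\E_N|_p\iso\bigsqcup_{C}H^1(C;\Z/N)^{\ast}\big/\{\pm1\},
\end{equation*}
the union running over $C\in\H_{3,g+1}$ above $p$.

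Theorem~\ref{thm-2} describes the monodromy $G_N$ on $\V_N|_p=\bigoplus_C H^1(C;\Z/N)$ as an extension of $\PSp(2g+4,\Z/3)$ by $\prod_\Omega\Sp(2g+2,\Z/N)$, with $\Omega$ and the permutation action as stated. The image $\Gbar_N$ of the monodromy on $\E_N|_p$ is then precisely $G_N\big/\prod_\Omega\{\pm 1\}$: the kernel of the action on $\V_N/\{\pm1\}$ is exactly the central subgroup $\prod_\Omega\{\pm1\}\sset\prod_\Omega\Sp(2g+2,\Z/N)$. The quotient replaces each $\Sp$ factor by $\PSp$, producing \eqref{eq-GN} after the substitution $b=2g+6$. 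The sequence splits because $\prod_\Omega\PSp(2g+2,\Z/N)$ has trivial center under the running hypotheses, so the extension is classified by the outer-action homomorphism $\PSp(2g+4,\Z/3)\to\Out\bigl(\prod_\Omega\PSp(2g+2,\Z/N)\bigr)$ alone and therefore admits a section.

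The main obstacle is the identification step: verifying that $(C,\phi)\leftrightarrow Y$ really is a bijection and that the sole isomorphism ambiguity is $\phi\mapsto-\phi$. This requires a careful analysis of $\Out(X_N)$ relative to the fixed $S_3$-quotient structure, and a careful treatment of the surjectivity condition when $N$ has multiple prime factors. Once this is in place and the correspondence is verified natural with respect to the $\pi_1(\P_b)$-action, the reduction to theorem~\ref{thm-2} is formal.
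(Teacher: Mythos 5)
Your overall route---realize the fiber of $\E_N$ as a quotient of the fiber of $\V_N$, conclude that $\Gbar_N$ is $G_N$ modulo a central subgroup of $\prod_\Omega\Sp(b-4,\Z/N)$, and then argue splitting---is the same as the paper's, but two steps are genuinely incomplete. First, the isomorphism ambiguity in the correspondence $Y\leftrightarrow(C,\phi)$ is not only $\phi\mapsto-\phi$: for every unit $u\in(\Z/N)^*$ the map $(n,s)\mapsto(un,s)$ is an automorphism of $X_N$ inducing the identity on $S_3$, since scalars commute with the permutation action on $N^2$, and for $3\nmid N$ the $S_3$-equivariant endomorphism ring of $N^2$ is exactly $\Z/N$. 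So the fiber of $\E_N$ over a fixed $C_6$ is the set of rank-one direct summands of $H^1(C;\Z/N)$---the paper's $\PP V_N(C_6)$---that is, primitive vectors modulo $(\Z/N)^*$, not modulo $\{\pm1\}$. Correspondingly the kernel of $G_N\to\Gbar_N$ is the product of the \emph{full} centers of the $\Sp(b-4,\Z/N)$ factors (all scalars $\lambda$ with $\lambda^2=1$), which is strictly larger than $\prod_\Omega\{\pm1\}$ whenever $\Z/N$ has square roots of unity other than $\pm1$ (e.g.\ $N=8$ or $N=35$); your identification would yield $\prod_\Omega\Sp(b-4,\Z/N)/\{\pm1\}$ rather than $\prod_\Omega\PSp(b-4,\Z/N)$.

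The splitting step is the more serious gap. Centerlessness of the kernel shows that the extension is \emph{determined} by the homomorphism $\PSp(b-2,\Z/3)\to\Out\bigl(\prod_\Omega\PSp(b-4,\Z/N)\bigr)$, but it does not follow that a section exists: the unique extension realizing a given outer action splits only if that homomorphism lifts to $\mathop{\rm Aut}$ of the kernel. Unlike $\Sp(2g+2,\Z/2)$ in theorem~\ref{thm-hurwitz-monodromy}, the groups $\PSp(b-4,\Z/N)$ have nontrivial outer automorphism groups, so one must rule out the possibility that the conjugation action twists the factors by outer automorphisms of $\PSp(b-4,\Z/N)$ on top of permuting them. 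The paper does exactly this: after reducing to $N=p^n$ by the Chinese remainder theorem, it applies the Gross--Kov\'acs criterion to reduce splitting of \eqref{eq-GN} to splitting of the induced extension of the one-factor stabilizer quotient $3{\cdot}3^{b-4}{:}\Sp(b-4,\Z/3)$ by a single $S=\PSp(b-4,\Z/p^n)$, and then observes that this quotient is perfect while $\Out(S)$ is solvable, so the outer action on that single factor is trivial and the sequence splits. Your argument needs some substitute for this step; as written, ``classified by the outer action, therefore splits'' is not a valid inference.
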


\begin{remarks}
The expressions $\Sp(\dots)$ make sense because $b$ always turns out to be
even.  Also, by $\PSp(b-4,\Z/N)$ we mean the quotient of $\Sp(b-4,\Z/N)$ by
its center, which is an elementary abelian $2$-group.
\end{remarks}

The first author is grateful to the University of Michigan, and
especially to Prof. Dolgachev, for organizing support during the fall
of 2007, when this project took form.  The second author is grateful
to Jordan Ellenberg for asking the original question which led to this
project and for pointing out how one can view it as a generalization
of results in \cite{AP} and \cite{H}.

\section{Proof of theorem~\ref{thm-hurwitz-monodromy}}
\label{sec-thm-1}

In this section we will review the relevant results of \cite{EEHS} and then
prove theorem~\ref{thm-hurwitz-monodromy}.  The key feature of the $d=4$
case of the Hurwitz monodromy problem is that $S_4$ is solvable, so that a
degree~4 cover $C\to\PP^1$ determines a number of related covers of
$\PP^1$, shown in Figure~\ref{fig1}.  To organize them we will use
subscripts to indicate their degrees over $\PP^1$.  If $C\to\PP^1$ is
connected and simply branched of degree~4, with $b$ branch points
$p_1,\dots,p_b$, then there is an associated surjection
$\pi_1(C-\{p_i\})\to S_4$, well-defined up to conjugacy by an element of
$S_4$, sending small loops around the $p_i$ to transpositions.  The
corresponding Galois cover $C_{24}$ has $S_4$ as its deck group, and we
define $C_6:=C_{24}/V_4$ and $C_2:=C_{24}/A_4$, where $V_4$ is Klein's
Viergruppe.  $C$ itself is $C_{24}/S_3$ for one of the four conjugate
$S_3$'s in $S_4$, so we could write $C_4$ for $C$.  We will refer to the
covers $C_{24}/D_8\to\PP^1$, for the three conjugate $D_8$'s in $S_4$, as
``the 3 $C_3$'s''.  As explained in \cite[sec.~4]{EEHS}, $C_2$ is
hyperelliptic, $C_2\to\PP^1$ has simple branching over the $p_i$, and
$C_{24}\to C_6$ and $C_6\to C_2$ are unramified with deck groups $2^2$ and
$3$.  The genera of $C_6$ and $C_2$ are $3g+4$ and $g+2$.  Each $C_3$ is
simply branched over $\PP^1$, with $b$ branch points and genus $g+1$. 
These data can be obtained with the Hurwitz formula or by topological
picture-drawing like that in Figure~\ref{fig3}.

\begin{figure}
\[
    \xymatrix{
    & & C_{24} \ar@{-}@/^{7pc}/[ddddd]^{S_4} \\
    & \\
    C=C_4 \ar@{-}@/^{1pc}/[uurr]^{S_3} \ar@{-}@/_{2pc}/[rrddd]
    	& & C_6 \ar@{-}[uu]_{V_4} \\
    & C_3 \ar@{-}@/^{1pc}/[uuur]^{D_8} \ar@{-}[ur]^{\Z/2}
        	&
        	& C_2 \ar@{-}@/_{1pc}/[uuul]_{A_4} \ar@{-}[ul]_{\Z/3} \\
	& \\
    & & \PP^1 \ar@{-}[uuu]_{S_3} \ar@{-}[uul] \ar@{-}[uur]_{\Z/2}
    }
\]
\caption{Covers associated to a degree~4 cover $C\to\PP^1$.}
\label{fig1}
\end{figure}
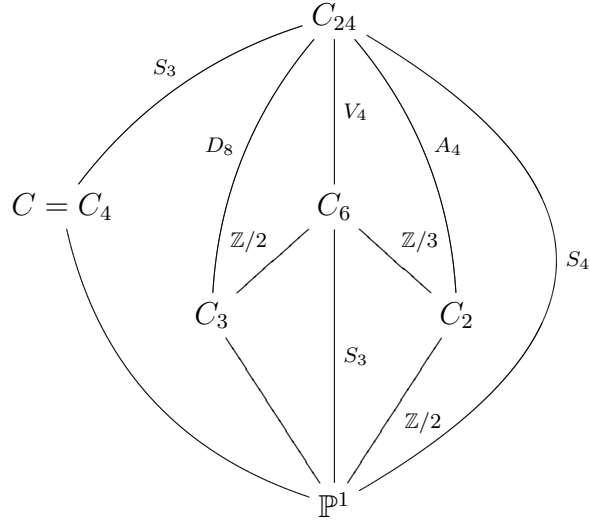

The interplay between these covers allows one to describe the fiber of
$\H_{4,g}\to\P_b$ concretely. Each of the $C_3$'s represents the same point
of $\H_{3,g+1}$, and $C_2$ represents a point of $\H_{2,g+2}$, yielding a
factorization of $\H_{4,g}\to\P_b$ as
$\H_{4,g}\to\H_{3,g+1}\to\H_{2,g+2}=\P_b$.  It is usually more convenient
to work with Galois covers, so we remark that $C_4,C_4'\in\H_{4,g}$ are
equivalent as covers (i.e., are the same point of $\H_{4,g}$) if and only
if the Galois covers $C_{24}$ and $C_{24}'$ are.  This follows from the
conjugacy of index-4 subgroups of $S_4$.  The same argument shows that
$C_3,C_3'\in\H_{3,g+1}$ are equivalent if and only if the Galois covers
$C_6,C_6'$ are.  Because of this, we will sometimes refer to (say) $C_6$ in
order to specify a point of $\H_{3,g+1}$.

Now we describe the fibers of $\H_{2,g+2}$, $\H_{3,g+1}$ and $\H_{4,g}$
over a $b$-tuple $(p_1,\dots,p_b)\in\P_b$ in terms of the possibilities for
the Galois covers $C_2$, $C_6$ and $C_{24}$.  There is only one $C_2$ with
specified branch points $p_1,\dots,p_b$.  The unramified $\Z/3$-covers of
$C_2$ that are Galois over $\PP^1$ are in bijection with the hyperplanes
$h$ in $H_1(C_2;\Z/3)$ that are preserved by the hyperelliptic involution
$\a$ of $C_2$.  The condition that the Galois group be $S_3$ rather than
$\Z/6$ is that $\a$ act on $H_1(C_2;\Z/3)/h$ by negation.  Since $\a$ acts
by negation on all of $H_1(C_2;\Z/3)$, these conditions on $h$ are vacuous,
and the possibilities for $C_6$ are in bijection with $\PP H^1(C_2;\Z/3)$.

Once $C_6\to\PP^1$ is fixed, the possibilities for $C_{24}$ are
parameterized in a similar but more complicated way.  The unramified
covers of $C_6$ with deck group $2^2$ that are Galois over $\PP^1$ are
in bijection with the codimension-two subspaces $L$ of $H_1(C_6;\Z/2)$
which are preserved by $S_3=\Gal(C_6/\PP^1)$.  And the condition for the
Galois group to be $S_4$ rather than some other extension $2^2.S_3$ is
that $S_3$ acts on $H_1(C_6;\Z/2)/L$ in the same way that $S_3=S_4/V_4$
acts on $V_4$.  Dualizing, the choices for $C_{24}$ correspond to the
subgroups $(\Z/2)^2$ of $H^1(C_6;\Z/2)$ which $S_3$ preserves and acts
on by its 2-dimensional irreducible representation, which permutes
triples of elements of $\Z/2$ with sum $0$.  We write $\PP V(C_6)$ for
this set of subspaces, the notation reflecting the fact that it is a
projective space in a non-obvious way.  

To see this, fix one of the three $C_3$'s, and regard $H^1(C_3;\Z/2)$ as
embedded in $H^1(C_6;\Z/2)$ under pullback.
Every one of the 2-dimensional subspaces of $H^1(C_6;\Z/2)$ considered
above contains a unique $\Z/2$ lying in $H^1(C_3;\Z/2)$, and every $\Z/2$
in $H^1(C_3;\Z/2)$ lies in a unique one of these 2-dimensional subspaces. 
So $\PP V(C_6)$ is in bijection with $\PP H^1(C_3;\Z/2)$.  The three
$C_3$'s all give the same projective space structure, so the choices for
$C_{24}$, given $C_6$, correspond to points of $\PP V(C_6)\iso
\PP^{2g+1}(\Z/2)$.  We can even be a little fancier and define $V(C_6)$ as
the union of the three $H^1(C_3;\Z/2)$'s, modulo identification under the
group $\Z/3$ of deck transformations.  Then $\PP V(C_6)$ is indeed the
projectivization of $V(C_6)$.

In summary, once $p_1,\dots,p_b$ are fixed, the possibilities for
$C=C_4$ are in bijection with the ordered pairs $(C_6,C_{24})$, where
$C_6$ corresponds to an element of $\PP H^1(C_2;\Z/3)$ and $C_{24}$ to an
element of $\PP V(C_6)$.  All of these constructions can be carried out
simultaneously for all $b$-tuples (this is the basic property of
Hurwitz spaces).  The result is that $\H_{4,g}$ is an unramified cover
of $\P_b$, which factors as $\H_{4,g}\to\H_{3,g+1}\to\P_b$, with a
fiber of the second map parameterizing the possible choices for $C_6$
(or equivalently $C_3$).  The fiber of the first map over a chosen
$C_6$ is $\PP V(C_6)\iso \PP^{2g+1}(\Z/2)$, parameterizing the possible
choices for $C_{24}$, given $C_6$.  So a fiber of $\H_{4,g}$ over
$\P_b$ consists of $|\PP^{2g+3}(\Z/3)|$ many copies of $\PP^{2g+1}(\Z/2)$.

We are interested in the  monodromy action of $G:=\pi_1(\P_b)$ on
this fiber.  It obviously respects the symplectic structure on
$H^1(C_2;\Z/3)$, and the stabilizer of $C_6$ preserves the symplectic
structure on $V(C_6)=H^1(C_3;\Z/2)$.  Therefore the image $G_2$ can be
no larger than in \eqref{eq-wreath-product}. 

Having reviewed the results of \cite{EEHS}, we will now prove the
theorem.  We will write $\b_1,\dots,\b_{b-1}$ for the standard
generators for the spherical braid group on $b$ strands, which is
$G$.

\begin{lemma}
\label{lem-F3-transvection}
The monodromy action of any $\b_i$ on a fiber $\PP H^1(C_2;\Z/3)$ of
$\H_{3,g+1}\to\P_b$ is a symplectic transvection, and $G$ acts by the
full projective symplectic group $\PSp(2g+4,\Z/3)$.
\end{lemma}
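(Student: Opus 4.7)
The plan is to identify each braid generator $\b_i$ with a Picard--Lefschetz transvection and then invoke a classical generation theorem for $\Sp$.  Realize $C_2$ as the hyperelliptic double cover of $\PP^1$ branched over $p_1,\dots,p_b$; by Riemann--Hurwitz its genus is $g+2$, so $H^1(C_2;\Z/3)$ is symplectic of dimension $2g+4$.  Choose arcs $\gamma_1,\dots,\gamma_{b-1}\sset\PP^1$, with $\gamma_i$ joining $p_i$ to $p_{i+1}$, arranged so that consecutive arcs share only their common endpoint and nonconsecutive arcs are disjoint.  The preimage $\delta_i\sset C_2$ of $\gamma_i$ is a simple closed curve---the vanishing cycle as $p_i$ collides with $p_{i+1}$---and the braid $\b_i$ lifts to the Dehn twist along $\delta_i$ on the fiber $C_2$.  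The Picard--Lefschetz formula then gives $\b_i\cdot w=w\pm\langle v_i,w\rangle v_i$ on $H^1(C_2;\Z/3)$, where $v_i$ is Poincar\'e dual to $[\delta_i]$.  This is a symplectic transvection, hence a transvection on $\PP H^1(C_2;\Z/3)$, proving the first assertion.

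For the second assertion it suffices to show the $t_{v_i}$ generate all of $\Sp(2g+4,\Z/3)$, after which projection to $\PSp(2g+4,\Z/3)$ yields the lemma.  The $\delta_i$ form the classical $A_{b-1}$ chain of vanishing cycles on a hyperelliptic Lefschetz pencil: $\delta_i\cdot\delta_{i+1}=\pm1$ and all other intersections vanish.  These classes span $H_1(C_2;\Z)$ (a standard fact of Picard--Lefschetz theory, from which one may even extract an explicit symplectic basis among the $v_i$), so their reductions span $H^1(C_2;\Z/3)$.  It is classical, going back to A'Campo, that transvections along such a chain generate $\Sp(2g+4,\Z)$---equivalently, that the hyperelliptic representation of the planar braid group $B_b$ is surjective onto $\Sp(2g+4,\Z)$.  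Reducing mod $3$ gives the desired surjection onto $\Sp(2g+4,\Z/3)$.

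The main obstacle I foresee is justifying the integer-to-mod-$3$ reduction cleanly.  Since the integral image is already all of $\Sp(2g+4,\Z)$ and the reduction map $\Sp(2g+4,\Z)\to\Sp(2g+4,\Z/3)$ is surjective (e.g.\ by strong approximation, or because $\Sp$ is generated over $\Z/3$ by its elementary transvections which all lift), this is automatic; if desired one can replace the appeal to A'Campo by a self-contained argument in which the $A_{b-1}$-chain structure is used to conjugate $t_{v_1}$ to every elementary symplectic transvection mod $3$.  One small bookkeeping point is that $G=\pi_1(\P_b)$ is the spherical, not the planar, braid group; but $G$ is generated by the images of the same $\b_i$, so surjectivity of the planar representation immediately implies surjectivity of $G\to\PSp(2g+4,\Z/3)$.
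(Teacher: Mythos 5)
Your first paragraph is essentially the paper's argument (which it attributes to Cohen): the lift to $C_2$ of a loop enclosing $p_i$ and $p_{i+1}$ is the vanishing cycle, $\b_i$ acts on $C_2$ by the Dehn twist in it, and Picard--Lefschetz gives a symplectic transvection on $H^1(C_2;\Z/3)$. That part is fine and matches the paper.

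The second half contains a genuine error. It is \emph{not} true that the transvections along the hyperelliptic $A_{b-1}$-chain generate $\Sp(2g+4,\Z)$, and this is not what A'Campo proved. Each vanishing cycle $v_i$ satisfies $q(v_i)=1$ for the $\Z/2$-quadratic form $q$ refining the mod-$2$ intersection pairing that is attached to the hyperelliptic curve, so every $t_{v_i}$ preserves $q$; hence the mod-$2$ image of the representation lies in the orthogonal group $O(q)$, a proper subgroup of $\Sp(2g+4,\Z/2)$, and the integral image is therefore a proper subgroup of $\Sp(2g+4,\Z)$. A'Campo's actual theorem is that the image \emph{contains the level-$2$ congruence subgroup} (and has mod-$2$ image exactly the orthogonal group); the paper itself relies on precisely this weaker statement in the proof of lemma~\ref{lem-foo-1} and must supplement it with a separate surjectivity argument mod $2$. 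Your desired conclusion mod $3$ does survive the correction: since the image contains $\Sp(2g+4,\Z)[2]$ and the reduction $\Sp(2g+4,\Z)\to\Sp(2g+4,\Z/2)\times\Sp(2g+4,\Z/3)$ is surjective, the level-$2$ congruence subgroup already surjects onto $\Sp(2g+4,\Z/3)$. So the lemma follows, but only after replacing your false intermediate claim by the correct statement; your proposed ``self-contained'' fallback (conjugating $t_{v_1}$ to every elementary transvection mod $3$) is only a gesture and is essentially the content of what must be proved. For comparison, the paper sidesteps all of this: it invokes Clebsch's theorem that $G$ is transitive on the sheets of $\H_{3,g+1}\to\P_b$, i.e.\ on $\PP H^1(C_2;\Z/3)$, so the $G$-conjugates of a single transvection yield all transvections, which generate the symplectic group; no integral or mod-$2$ input is needed.
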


\begin{proof}
This is due to Cohen \cite{Cohen}; the key point is the following.
Let $L$ be a simple loop in $\PP^1$ encircling $p_i$ and $p_{i+1}$ but
none of the other branch points.  Then $L$ lifts to a closed loop
$\Ltilde$ on $C_2$.  The monodromy of $\b_i$ on $C_2$ is a Dehn twist
in $\Ltilde$.  (For a visual proof see figs.~5--7 of
\cite[ch.~1]{arnold} and the surrounding text.)  
This acts on cohomology by a transvection.

For the second statement we appeal to Clebsch's theorem
\cite[pp.~224--225]{Clebsch} that $G$ is transitive on the sheets of
$\H_{3,g+1}\to\P_b$, which is to say that it is transitive on $\PP
H^1(C_2;\Z/3)$.  The $G$-conjugates of the $\b_i$  therefore give all the
transvections, which are well-known to generate the symplectic group.
\end{proof}

Now pick a point of $\H_{3,g+1}$; this corresponds to a cover $C_6$
(equivalently, $C_3$) and also to an element of $\PP H^1(C_2;\Z/3)$, say
the one in which $\b_1$ acts by a transvection.  We will abbreviate
$V(C_6)$ to $V$.  Now we consider the subgroup $H$ of $G$ whose monodromy
sends $C_6$ to itself, and the action of $H$ on the fiber $\PP V$ of
$\H_{4,g}$ over $C_6$.  

\begin{lemma}
\label{lem-F2-transvection}
$H$ contains $\b_1$, which acts trivially on $V$, and
$\b_3,\dots,\b_{b-1}$, which act by symplectic transvections.  And $H$
acts on $V$ by the full projective symplectic group
$\PSp(V)\iso\Sp(2g+2,\Z/2)$.
\end{lemma}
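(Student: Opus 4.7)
The plan rests on interpreting ``the one in which $\beta_1$ acts by a transvection'' as the transvection's distinguished direction, so that $[C_6]=[\tilde L_1]\in\PP H^1(C_2;\Z/3)$. For membership in $H$, Lemma~\ref{lem-F3-transvection} tells us that each $\beta_i$ acts on $\PP H^1(C_2;\Z/3)$ as the symplectic transvection with direction $[\tilde L_i]$, where $\tilde L_i$ is the $C_2$-lift of a simple loop $L_i$ around $p_i,p_{i+1}$. Then $[\tilde L_1]$ is fixed by $\beta_1$ tautologically, and for $i\geq 3$ it is fixed because $L_1$ and $L_i$ are disjoint in $\PP^1$, which forces $\tilde L_1\cdot\tilde L_i=0$ and hence $[\tilde L_1]\in\tilde L_i^\perp$. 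The case $i=2$ is excluded because adjacent cycles in the chain intersect in $\pm 1$.

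For the actions on $V=V(C_6)$: when $i\geq 3$, the standard Hurwitz/Dehn-twist correspondence applied to $C_3\to\PP^1$ identifies $\beta_i$'s action on $H^1(C_3;\Z/2)$ as the symplectic transvection dual to the $C_3$-lift of $L_i$, which under $V\iso H^1(C_3;\Z/2)$ is a symplectic transvection on $V$. For $\beta_1$, the key input is that $C_6\to C_2$ is the cyclic $\Z/3$-cover classified by $\langle\tilde L_1,-\rangle$, and since $\langle\tilde L_1,\tilde L_1\rangle=0$ the cover trivializes over $\tilde L_1$: so $\tilde L_1$ lifts to three disjoint cycles $M_1,M_2,M_3$ on $C_6$ cyclically permuted by $\Gal(C_6/C_2)=\Z/3$. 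The Dehn twist $\beta_1^2$ lifts to the commuting product $T_{M_1}T_{M_2}T_{M_3}$ on $C_6$, whose induced transvections on $H^1(C_6;\Z/2)$ have axes in the $\Z/3$-invariant subspace $\pi^*H^1(C_2;\Z/2)$. Since $V$ is realized as the $2$-dimensional-irrep $S_3$-isotypic component of $H^1(C_6;\Z/2)$, which is symplectically orthogonal to the invariant part, $\beta_1^2$ acts trivially on $V$. The passage from $\beta_1^2$ to $\beta_1$ itself should follow from the observation that the permutation $\beta_1$ induces on the three $C_3$'s (coming from how the braid conjugates the monodromy around $p_1, p_2$) is absorbed by the $\Z/3$-identification in the definition of $V(C_6)$.

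For generation, the $H$-conjugates of $\beta_3$ will realize all symplectic transvections on $V$ once a Clebsch-type transitivity statement for the fiber of $\H_{4,g}\to\H_{3,g+1}$ is established, and such transvections generate $\Sp(V)\iso\Sp(2g+2,\Z/2)$. The main obstacle I anticipate is the upgrade from $\beta_1^2$ to $\beta_1$: a naive homological argument only forces an involution, and to eliminate this ambiguity one must carefully combine the $S_3$-isotypic decomposition of $H^1(C_6;\Z/2)$ with the way the half-twist $\beta_1$ interacts with the $\Z/3$-identification defining $V$. The precise choice $[C_6]=[\tilde L_1]$ is exactly what makes this reduction go through cleanly.
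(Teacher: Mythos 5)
Your reduction of membership in $H$ to the vanishing of the intersection numbers $\tilde L_1\cdot\tilde L_i$ is correct and matches what the paper leaves implicit, and your identification of the three disjoint lifts $M_1,M_2,M_3$ of $\tilde L_1$ to $C_6$ is exactly the geometric input the paper uses. But the step you yourself flag as the main obstacle --- passing from $\b_1^2$ to $\b_1$ --- is a genuine gap, and it comes from miscounting where the squaring happens. By lemma~\ref{lem-F3-transvection} (Cohen's picture), the half-twist $\b_1$ \emph{already} acts on the double cover $C_2$ as the full Dehn twist in $\tilde L_1$; there is no further square root to extract. Hence $\b_1$ itself, not $\b_1^2$, lifts to $C_6$ (up to deck transformations, which act trivially on $V$ by the definition of $V$) as $T_{M_1}T_{M_2}T_{M_3}$, and triviality on $V$ follows at once. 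Your proposed bridge --- that the permutation of the three $C_3$'s is ``absorbed by the $\Z/3$-identification'' --- does not resolve the ambiguity you created: knowing only that $\b_1$ acts as a symplectic involution on $V$ leaves many nontrivial possibilities, so as written the central claim of the lemma is unproved.

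Second, your mechanism for why $T_{M_1}T_{M_2}T_{M_3}$ kills $V$ is shaky. The individual classes $[M_i]$ need not lie in the pullback of $H^1(C_2;\Z/2)$: only their sum is the transfer of $[\tilde L_1]$, and the $M_i$ are cyclically permuted, not fixed, by $\Gal(C_6/C_2)$. Moreover $V(C_6)$ as defined --- the union of the three pullbacks $H^1(C_3;\Z/2)\hookrightarrow H^1(C_6;\Z/2)$ modulo the $\Z/3$-identification --- is not contained in the two-dimensional-irrep isotypic component, so the orthogonality you invoke is not available in the form stated. The paper sidesteps all of this with an explicit basis of loops $e_j^{(i)},f_j^{(i)}$ on $C_6$ that are visibly disjoint from the three $\b_1$-loops, so every relevant intersection number vanishes on the nose; the same picture gives the transvection statement for $\b_3,\dots,\b_{b-1}$ with respect to the symplectic form pulled back from $C_3$ (a subtlety your appeal to the ``standard correspondence'' on $C_3\to\PP^1$ glosses over, since the preimage of $L_i$ in $C_3$ is not a single circle). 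Your final step is the paper's: Clebsch's transitivity for $\H_{4,g}\to\P_b$ restricts to transitivity of $H$ on $\PP V$, and a transitive group containing one transvection contains them all; you correctly identify this as what is needed but leave it unestablished.
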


\begin{figure}
\begin{center}
\begin{picture}(350,334)(0,0)
\setlength{\unitlength}{1pt}
\thicklines
\put(190,60){\line(1,0){35}}
\put(230,60){\vector(1,0){0}}
\put(190,238){\line(1,0){35}}
\put(230,238){\vector(1,0){0}}
\put(108,140){\line(0,-1){35}}
\put(108,100){\vector(0,-1){0}}
\put(300,140){\line(0,-1){35}}
\put(300,100){\vector(0,-1){0}}
\put(-4,155){\makebox(0,0)[bl]{%
\setlength{\unitlength}{1.2pt}
\begin{picture}(0,0)(0,0)
\put(0,0){\makebox(0,0)[bl]{\includegraphics[scale=.6]{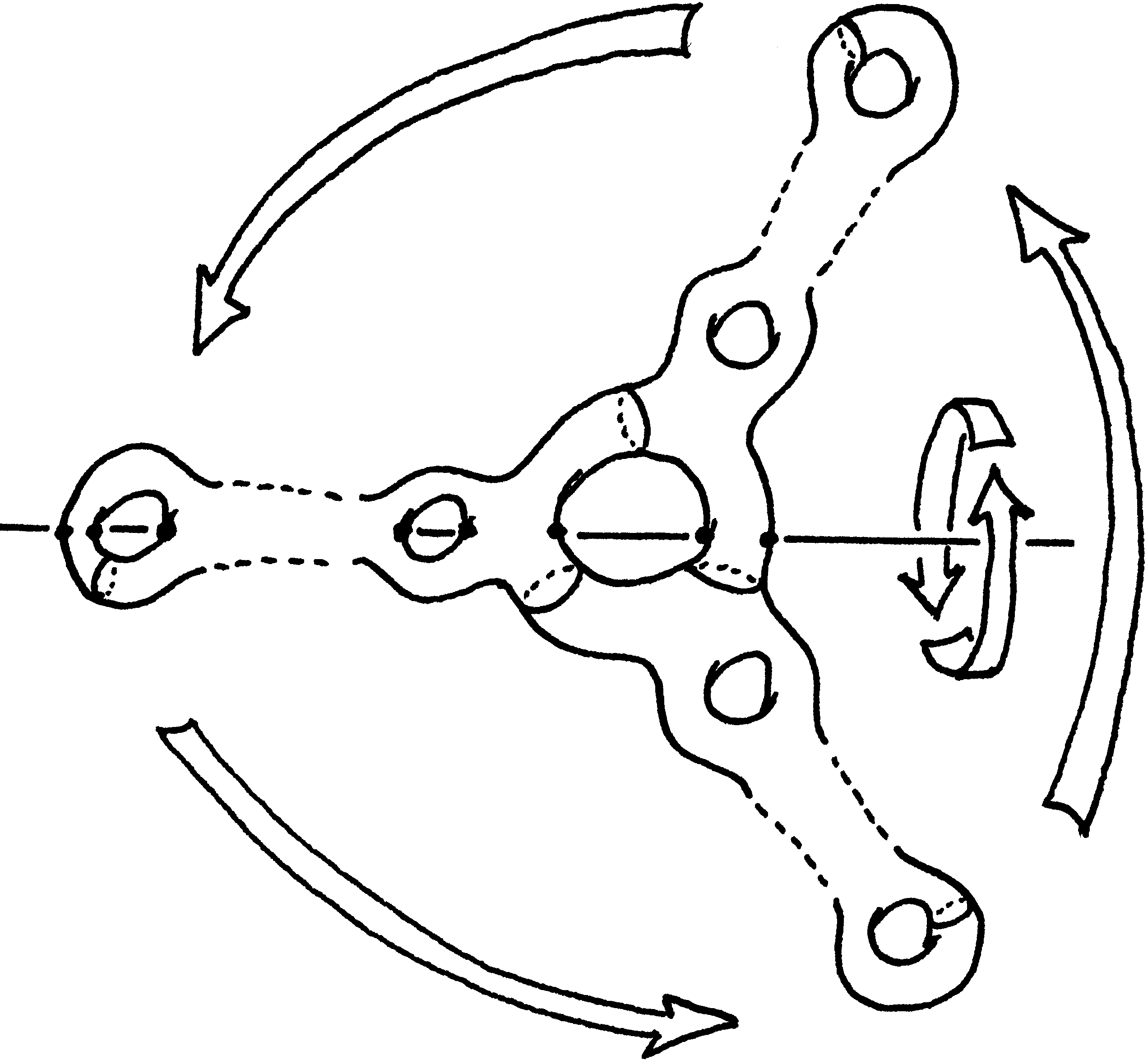}}}
\put(30,130){\makebox(0,0)[c]{$C_6$}}
\put(15,55){\makebox(0,0)[c]{$\b_{b-1}$}}
\put(142,20){\makebox(0,0)[c]{$\b_{b-1}$}}
\put(106,145){\makebox(0,0)[c]{$\b_{b-1}$}}
\put(68,52){\makebox(0,0)[c]{$\b_1$}}
\put(110,64){\makebox(0,0)[c]{$\b_1$}}
\put(80,98){\makebox(0,0)[c]{$\b_1$}}
\end{picture}
}}
\put(229,157){\makebox(0,0)[bl]{%
\setlength{\unitlength}{1.2pt}
\begin{picture}(0,0)(0,0)
\put(0,0){\makebox(0,0)[bl]{\includegraphics[scale=.6]{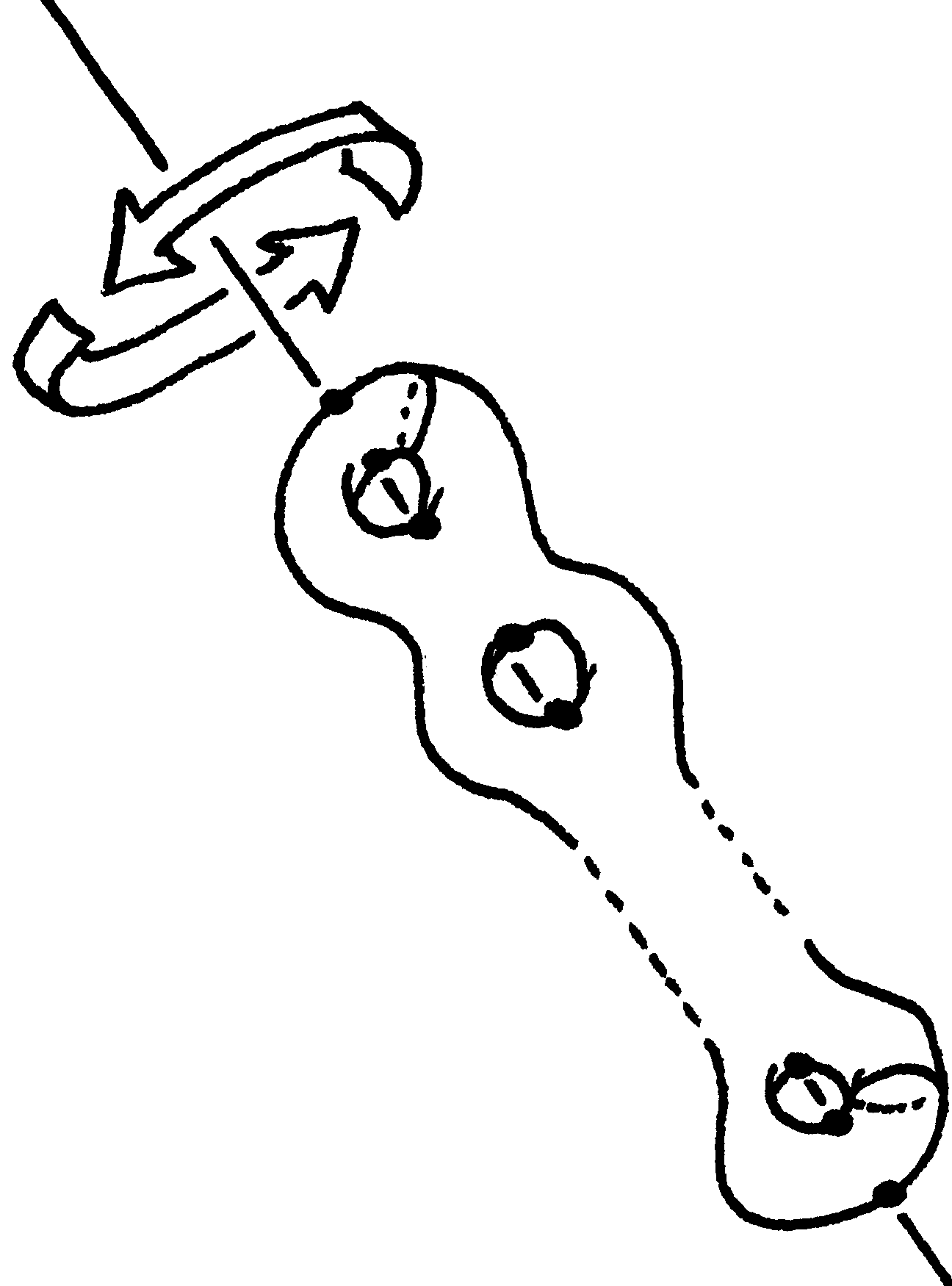}}}
\put(75,60){\makebox(0,0)[c]{$C_2$}}
\put(90,16){\makebox(0,0)[c]{$\b_{b-1}$}}
\put(42,82){\makebox(0,0)[c]{$\b_1$}}
\end{picture}
}}
\put(6,5){\makebox(0,0)[bl]{%
\setlength{\unitlength}{1.2pt}
\begin{picture}(0,0)(0,0)
\put(-3,0){\makebox(0,0)[bl]{\includegraphics[scale=.6]{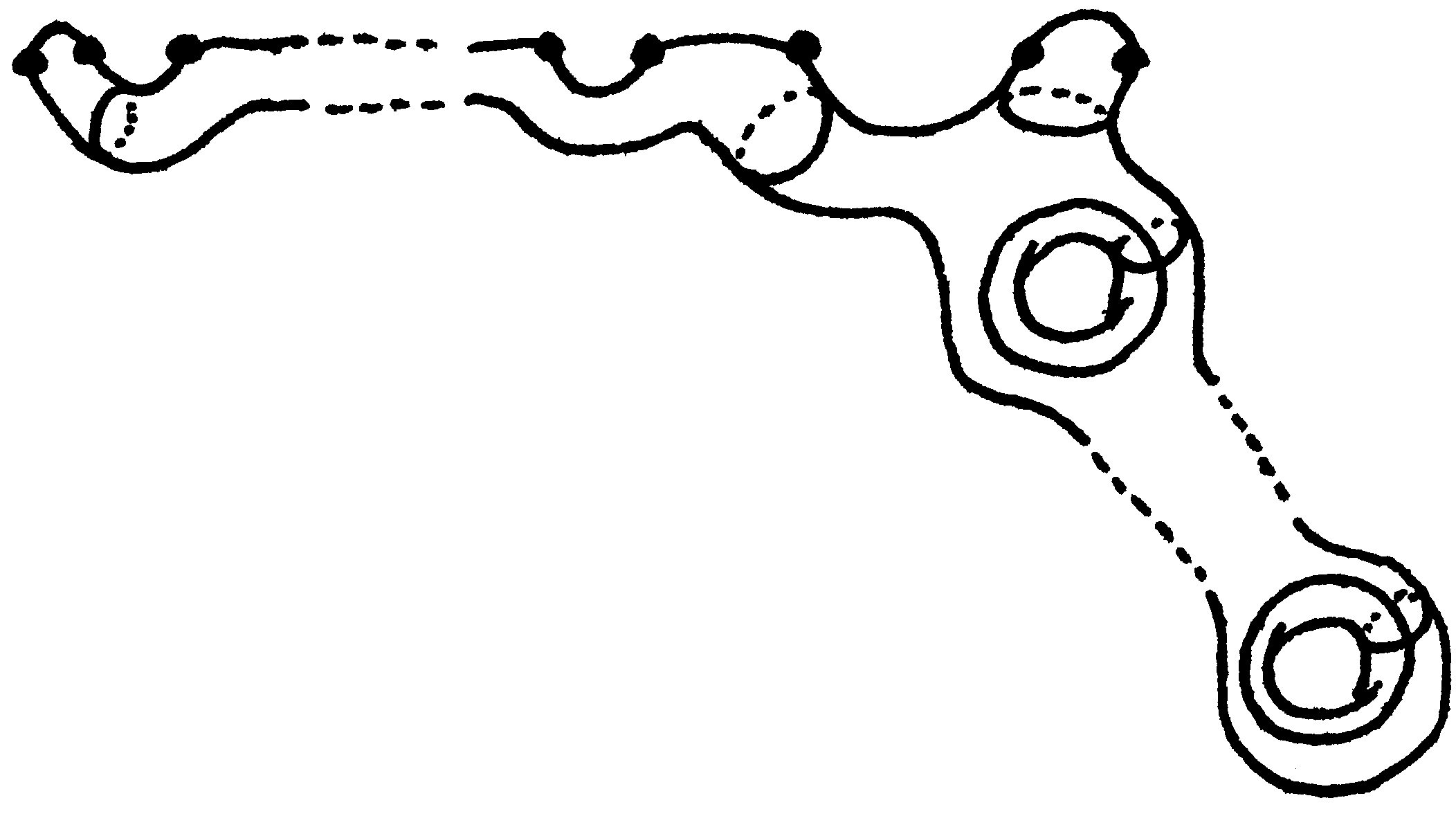}}}
\put(50,30){\makebox(0,0)[c]{$C_3$}}
\put(108,52){\makebox(0,0)[c]{$f_1$}}
\put(81,33){\makebox(0,0)[c]{$e_1$}}
\put(134,20){\makebox(0,0)[c]{$f_{g+1}$}}
\put(96,5){\makebox(0,0)[c]{$e_{g+1}$}}
\end{picture}
}}
\put(256,6){\makebox(0,0)[bl]{%
\setlength{\unitlength}{1.2pt}
\begin{picture}(0,0)(0,0)
\put(0,0){\makebox(0,0)[bl]{\includegraphics[scale=.6]{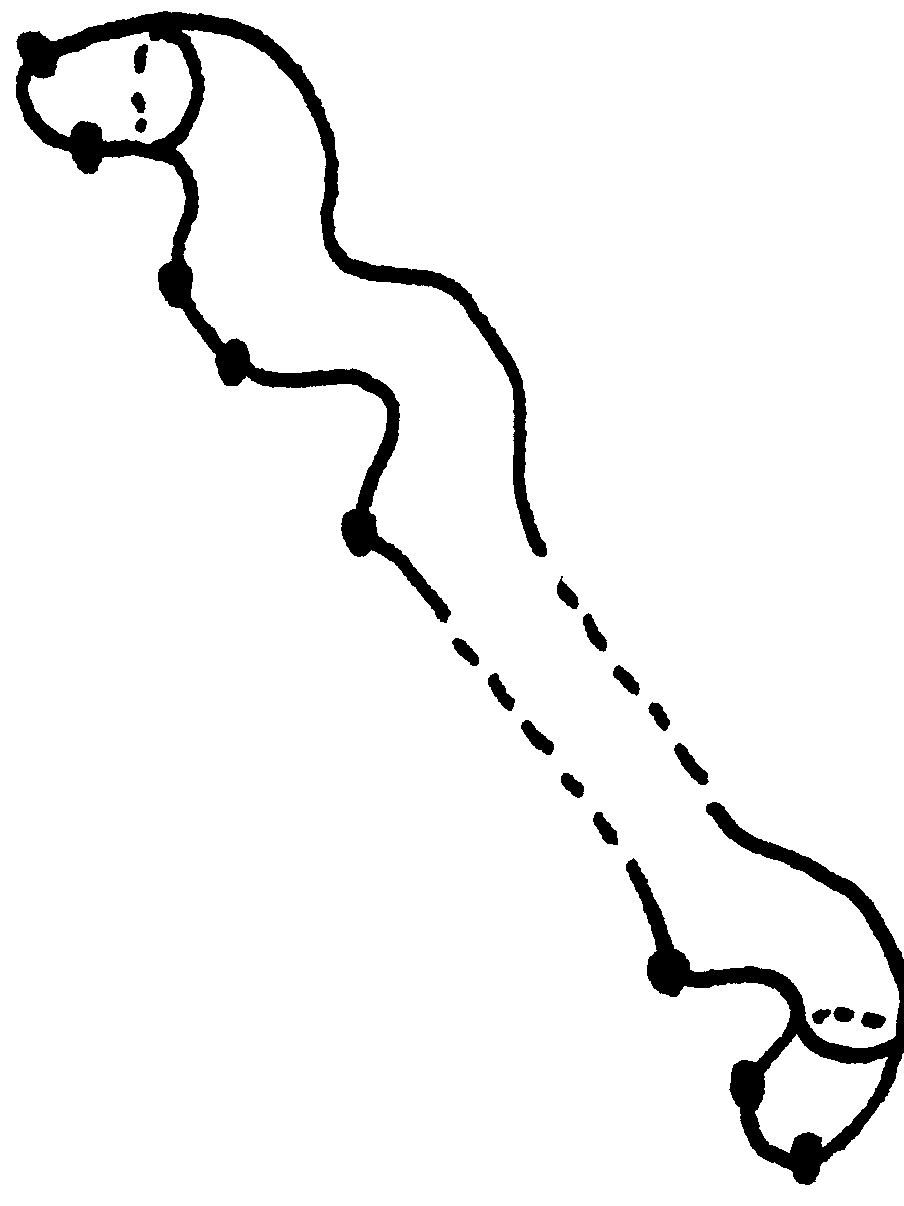}}}
\put(48,60){\makebox(0,0)[c]{$\PP^1$}}
\put(-3,72){\makebox(0,0)[c]{$p_1$}}
\put(0,60){\makebox(0,0)[c]{$p_2$}}
\put(46,-2){\makebox(0,0)[c]{$p_b$}}
\end{picture}
}}
\end{picture}
\end{center}
\caption{Covers of $\PP^1$ associated to $C\to\PP^1$.}
\label{fig3}
\end{figure}

\begin{proof}
Before beginning the proof proper we make $V$ concrete.  Figure~\ref{fig3}
shows the maps $C_6\to C_3\to\PP^1$ and $C_6\to C_2\to\PP^1$.  The picture
of $\PP^1$ shows the branch points $p_1,\dots,p_b$.  The loop encircling
$p_1$ and $p_2$ has a lift to $C_2$, marked $\b_1$.  We use this notation
because $\b_1$ acts on $C_2$ as a Dehn twist in that loop, which was called
$\Ltilde$ in the proof of lemma~\ref{lem-F3-transvection}.  Now, $C_6$ is
defined as the cover of $C_2$ corresponding to the elements of $\pi_1(C_2)$
having trivial intersection (mod~3) with $\Ltilde$, and is shown.  The deck
group acts by the obvious $\Z/3$ rotation.  Next, there are 3 involutions
in $S_3=\Gal(C_6/\PP^1)$, one of which is the $\Z/2$ rotation around the
horizontal axis.  The quotient $C_3$ is shown, together with the branch
points of $C_6\to C_3$ and a basis $e_1,f_1,\dots,e_{g+1},f_{g+1}$ of
$H^1(C_3)$.  If we indicate lifts of these loops to the 3 `arms' of $C_6$
by $e_j^{(i)}$ and $f_j^{(i)}$, for $i=0,1,2$ and $j=1,\dots,g+1$, then up
to relabeling, the pullback $V^{01}$ of $H^1(C_3)$ is spanned by the
$e_j^{(0)}+e_j^{(1)}$ and $f_j^{(0)}+f_j^{(1)}$.  The other two $C_3$'s
give the same result but with different superscripts.  The space $V$ is the
union of these three vector spaces, modulo cyclic permutation of the upper
labels $0$, $1$ and $2$.

Now, $\b_1$  lies in $H$, so it lifts to $C_6$;  ``the'' action on $C_6$ is
only well-defined up to composition with deck transformations.  But these
act trivially on $V$, by the definition of $V$, so the action of $\b_1$ on
$V$ may be computed from any one of the three lifts of $\b_1$.  One of
these lifts is the composition of the Dehn twists in the three loops marked
$\b_1$.  This obviously leaves  the $e_j^{(i)}$ and $f_j^{(i)}$
unperturbed, so $\b_1$ acts trivially on $V$.

The same analysis applies to $\b_{b-1}$, one of whose lifts to $C_6$
is the composition of the three indicated Dehn twists.  Its
restriction to $V^{01}$ is the transvection in
$f_{g+1}^{(0)}+f_{g+1}^{(1)}$ (with respect to the symplectic form
pulled back from $C_3$, not the one on $H^1(C_6)$).  This
proves that $\b_{b-1}$ acts on $V$ as a transvection.  The argument is
the same for $\b_3,\dots,\b_{b-2}$.

We remark that up to this point, the argument works perfectly well
with $\Z$ coefficients in place of $\Z/2$.

Finally, we again use Clebsch's transitivity theorem, this time
applied to the fibers of $\H_{4,g}\to\P_b$, to deduce that $H$ acts
transitively on the fiber of $\H_{4,g}$ over the point of $\H_{3,g+1}$
corresponding to $C_6$.  This fiber is $\PP V$.  Since the image of $H$
contains a transvection and is transitive on $\PP V$, it contains all
transvections, hence equals $\PSp(V)$.
\end{proof}

Now we will consider the kernel $K$ of $G\to\PSp(2g+4,\Z/3)$ and its
image $K_2$ in $G_2$, which is a subgroup of the direct product
appearing in \eqref{eq-wreath-product}.  We will improve the previous lemma
by showing that $K$ has the same surjectivity properties we just
established for $H$; then we will show that this is a fierce restriction on
$K_2$.  

\begin{lemma}
\label{lem-surjects-to-factors}
The projection of $K_2$ to any factor of 
$\prod_\Omega\PSp(2g+2,\Z/2)$ is surjective.
\end{lemma}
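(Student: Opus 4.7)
The plan is to reduce to a single factor and then produce explicit elements of $K\cap H$ (with $H$ as in the proof of Lemma~\ref{lem-F2-transvection}) that act on $V$ as symplectic transvections. Since $K_2$ is normal in $G_2$, conjugation by any $g\in G_2$ identifies the projection of $K_2$ to one factor of $\prod_\Omega\Sp(2g+2,\Z/2)$ with its projection to the factor indexed by $g\cdot\omega_0$. By Lemma~\ref{lem-F3-transvection}, $G_2$ surjects onto $\PSp(2g+4,\Z/3)$, which acts transitively on $\Omega=\PP^{2g+3}(\Z/3)$. It therefore suffices to prove that $K\cap H$ surjects onto the $\omega_0$-factor, i.e., onto $\Sp(V)\iso\Sp(2g+2,\Z/2)$.

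The key observation is that $\b_i^3$ lies in $K\cap H$ for each $i=3,\dots,b-1$, yet still acts on $V$ as a nontrivial transvection. Indeed, by Lemma~\ref{lem-F3-transvection}, $\b_i$ acts on $\PP H^1(C_2;\Z/3)$ as a symplectic transvection, which has order~$3$ in characteristic~$3$; so $\b_i^3$ acts trivially there, placing it both in $K$ and in $H$ (the latter because $H$ is the stabilizer of $\omega_0$). By Lemma~\ref{lem-F2-transvection}, $\b_i$ acts on $V$ as a transvection of order~$2$, and since $3$ is odd, $\b_i^3$ acts on $V$ as exactly the same transvection as $\b_i$. This mismatch between the orders of transvections in the two characteristics is what makes the argument possible.

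The rest is formal. Since $K\triangleleft G$, the subgroup $K\cap H$ is normal in $H$, so its image in $\Sp(V)$ is normalized by the image of $H$, which equals $\Sp(V)$ by Lemma~\ref{lem-F2-transvection}. That image contains the transvection coming from $\b_3^3$, and $\Sp(V)$ acts transitively on the nonzero vectors of $V$; hence the image contains every symplectic transvection on $V$, and these generate $\Sp(V)$. The main content of the lemma is the a~priori possibility that every element of $K$ acts trivially on $V$, and exhibiting the elements $\b_i^3$ is exactly what rules this out---once this trick is seen, everything else is bookkeeping.
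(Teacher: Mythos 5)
Your proposal is correct and follows essentially the same route as the paper: reduce to one factor by transitivity, note that $\b_{b-1}^3$ (or $\b_i^3$ for $i\geq3$) lies in $K\cap H$ yet acts on $V$ as a nontrivial transvection, and then use normality of the image of $K$ in $\PSp(V)$ together with the fact that transvections form a single conjugacy class generating the group. The extra detail you supply about the order-$3$ versus order-$2$ mismatch of transvections in the two characteristics is exactly the implicit content of the paper's one-line appeal to ``the transvection $\b_{b-1}^3$.''
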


\begin{proof}
Because $G$ permutes the factors transitively, it suffices to treat
any one, say $\PSp(V)$.  Now, $K$ is normal in $H$, and $H$
surjects to $\PSp(V)$, so the image of $K$ is a normal subgroup of
$\PSp(V)$.  It also contains the transvection $\b_{b-1}^3$.  Therefore
it contains all transvections, hence all of $\PSp(V)$.
\end{proof}

If $S$ is a group, then we call a subgroup of a product of copies of
$S$ diagonally embedded if it projects isomorphically to each factor.
The language expresses the fact that it is {\it the\/} diagonal
subgroup, up to automorphisms of the factors.

\begin{lemma}
\label{lem-simple-groups}
Let $S$ be a nonabelian simple group, $\Omega$ a finite set, and
$K_2$ a subgroup of $\prod_\Omega S$ that surjects to each factor.
Then $K_2\iso S^n$ for some $n$, and there is a partition
$\Omega=\Omega_1\coprod\dots\coprod\Omega_n$, such that the $i$th
factor of $K_2$ is diagonally embedded in $\prod_{\Omega_i}S$, for
each $i$.
\end{lemma}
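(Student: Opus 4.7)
The plan is to extract a partition of $\Omega$ from a pairwise Goursat analysis and then bootstrap up to the full product using the simplicity and centerlessness of $S$.

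First, for each pair $\omega\neq\omega'$ in $\Omega$, consider the projection $K_{\omega,\omega'}$ of $K_2$ to $S_\omega\times S_{\omega'}$.  This subgroup surjects onto both factors, and since the only normal subgroups of the nonabelian simple group $S$ are $\{1\}$ and $S$, Goursat's lemma forces $K_{\omega,\omega'}$ to be either the full product or the graph of some isomorphism $\phi_{\omega,\omega'}\colon S_\omega\to S_{\omega'}$.  Declare $\omega\sim\omega'$ in the latter case.  Reflexivity and symmetry are immediate; transitivity follows because if $k_{\omega'}=\phi_{\omega,\omega'}(k_\omega)$ and $k_{\omega''}=\phi_{\omega',\omega''}(k_{\omega'})$ for every $k\in K_2$, then $k_{\omega''}=\phi_{\omega',\omega''}\phi_{\omega,\omega'}(k_\omega)$, so $K_{\omega,\omega''}$ is the graph of the composite.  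Let $\Omega_1,\dots,\Omega_n$ be the equivalence classes.  Picking a basepoint $\omega_i\in\Omega_i$ and assembling the isomorphisms $\phi_{\omega_i,\omega}$ for $\omega\in\Omega_i$ shows that the projection of $K_2$ to $\prod_{\Omega_i}S$ is a diagonal copy $D_i\cong S$, so $K_2\subseteq D_1\times\cdots\times D_n\cong S^n$.

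The remaining task is to show this inclusion is an equality.  By construction, for $i\neq j$ the projection of $K_2$ to $D_i\times D_j$ is the full product (because $\omega_i\not\sim\omega_j$), so it suffices to prove the standalone claim: a subgroup $K$ of $S^n$ that surjects onto every pair of factors equals $S^n$.  I would prove this by induction on $n$.  Applying the induction hypothesis to the projection of $K$ to the first $n-1$ coordinates gives surjectivity there, so $K$ is determined by a homomorphism $\phi\colon S^{n-1}\to S/N$, where $N=K\cap(\{1\}^{n-1}\times S)$.  Simplicity of $S$ gives $N\in\{1,S\}$, and $N=S$ immediately yields $K=S^n$.

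The main obstacle is ruling out $N=1$.  In that case $\phi\colon S^{n-1}\to S$ is nontrivial (since $K$ surjects to the last factor), so by simplicity some restriction $\phi|_{S_i}$ is an isomorphism onto $S$; then centerlessness of $S$ forces each $\phi|_{S_j}$ for $j\neq i$ to land in the centralizer of $\phi(S_i)=S$ and thus to be trivial.  Consequently the last coordinate of every $k\in K$ is a function of only its $i$th coordinate, so the projection of $K$ to $S_i\times S_n$ is the graph of an isomorphism and not all of $S\times S$, contradicting the hypothesis.  This completes the induction and yields $K_2=D_1\times\cdots\times D_n\cong S^n$ together with the desired partition of $\Omega$.
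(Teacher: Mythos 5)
Your proof is correct, but it is organized differently from the paper's. The paper runs a single induction on $|\Omega|$: it peels off one coordinate at a time, applies Goursat's lemma to $S\times\prod_{\Omega'}S$, and builds the partition as it goes, merging the new point $\omega$ into an existing block exactly when $K_2$ meets the factor $S_\omega$ trivially. You instead extract the partition up front from purely pairwise data---declaring $\omega\sim\omega'$ when the two-coordinate projection is a graph rather than the full product---and then reduce everything to the standalone claim that a subgroup of $S^n$ surjecting onto every pair of factors is all of $S^n$, proved by a separate induction using the normal-subgroup structure of $S^{n-1}$ and the triviality of $Z(S)$. The ingredients are the same (Goursat, simplicity, and the fact that normal subgroups of a power of a nonabelian simple group are subproducts), but your decomposition makes the partition visibly canonical, being the equivalence relation read off from the pairwise projections, and isolates the remaining content in a reusable ``$2$-wise surjective implies surjective'' statement; the paper's version is more compact and produces the partition only implicitly through the inductive construction. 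One cosmetic point: in your standalone claim the hypothesis ``surjects onto every pair of factors'' is vacuous when $n=1$, so you should either start the induction at $n=2$ or carry along the (available) hypothesis that $K$ surjects onto each single factor; this does not affect correctness.
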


\begin{proof}
We first remark that a product of copies of a nonabelian simple group is a
product in only one way, since the factors are the normal simple subgroups.
We will also use the following standard fact \cite[ch.~2, thm.~4.19]{Su}:
if $A$, $A'$ are groups, then the subgroups $J$ of $A\times A'$ are in
bijection with the 5-tuples $(B,B',C,C',\phi)$ where $B$ and $B'$ are
subgroups of $A$ and $A'$, $C$ and $C'$ are normal subgroups of $B$ and
$B'$, and $\phi$ is an isomorphism $B/C\iso B'\!/C'$.  ($B$ and $B'$ are
the projections of $J$ to $A$ and $A'$,  $C$ and $C'$ are the intersections
of $J$ with the factors, and $J$ is the preimage of the graph of $\phi$
under $B\times B'\to B/C\times B'\!/C'$.)

The proof is by induction on $|\Omega|$, the case of a singleton being
trivial.  So suppose $|\Omega|>1$, choose a point $\w\in\Omega$, and
define $\Omega':=\Omega-\{\w\}$.  We apply the above with
$A=\prod_{\{\w\}}S\iso S$, $A'=\prod_{\Omega'}S$ and $J=K_2\sset
A\times A'$.   By the assumed surjectivity, $B$ surjects to $A$, and
$B'$ surjects to each factor of $\prod_{\Omega'}S$.  By induction,
$B'\iso S^m$ for some $m$, and there is a partition
$\Omega'=\Omega_1'\coprod\dots\coprod\Omega_m'$ such that the $i$th
factor of $B'$ is diagonally embedded in $\prod_{\Omega_i'}S$.  Now,
because $B\iso S$ is simple, $C$ is either all of $B$ or is trivial.
In the first case, $B'\!/C'\iso B/C=1$, so $C'=B'$ also.  Then
$K_2=B\times B'\iso S^{m+1}$, with its $i$th factor diagonally
embedded in $\prod_{\Omega_i}S$, where
$\Omega_1=\Omega_1',\dots,\Omega_m=\Omega_m'$,  $\Omega_{m+1}=\{\w\}$.

In the second case, $B'\!/C'\iso B/C\iso S$, so
$K_2\sset B'\times B=S^m\times S$ is the graph of a surjection $B'\to B$. 
Because $S$ is nonabelian simple, any normal subgroup of $S^m$ is the
product of some of the given factors.  Therefore the kernel of $B'\to B$
consists of $m-1$ factors of $S^m$, say all but the first.  We conclude
that $K_2\sset B'\times B$ is generated by a diagonally embedded copy of
$S$ in each of $\prod_{\Omega_2'}S,\dots,\prod_{\Omega_m'}S$, together with
the graph of an isomorphism from a diagonally embedded copy of $S$ in
$\prod_{\Omega_1'}S$ to $B=\prod_{\{\w\}}S\iso S$.  It follows that
$K_2\iso S^m$, with its $i$th factor diagonally embedded in
$\prod_{\Omega_i}S$, where $\Omega_1=\Omega_1'\cup\{\w\}$ and
$\Omega_2=\Omega_2',\dots,\Omega_m=\Omega_m'$.
\end{proof}

\begin{proof}[Proof of theorem~\ref{thm-hurwitz-monodromy}:]
We will write $S$ for $\PSp(2g+2,\Z/2)$.  We know by
lemma~\ref{lem-F3-transvection} that $G_2$ surjects to $\PSp(2g+4,\Z/3)$,
so to establish the exact sequence it suffices to show that $K_2$ is the
full direct product $\prod_\Omega S$.  Since $g>1$, $S$ is simple.  It
follows from lemmas~\ref{lem-surjects-to-factors}
and~\ref{lem-simple-groups} that there is a partition
$\Omega=\Omega_1\coprod\dots\coprod\Omega_n$ such that $K_2\iso S^n$, its
$i$th factor being diagonally embedded in $\prod_{\Omega_i}S$.  Now, $G$'s
action on $K_2$ permutes the factors of $K_2$, in a manner compatible with
$G$'s action on $\Omega$.  Therefore $G$ respects the partition.  But
$\PSp(2g+4,\Z/3)$ acts primitively on $\Omega$, so either all the
$\Omega_i$ are singletons or else there is only one $\Omega_i$.  In the
first case we have $K_2=\prod_\Omega S$, as desired.  So we must rule out
the case where $K_2$ is isomorphic to $S$ and is diagonally embedded in
$\prod_\Omega S$.  We will do this by exhibiting a nontrivial element of
$K_2$ with trivial projection to one factor.  By
lemma~\ref{lem-F2-transvection}, $\b_1^3$ acts trivially on $V$.  On the
other hand, $\b_1^3$ is $G$-conjugate to $\b_{b-1}^3$, whose image in
$\PSp(V)$ is nontrivial, by the same lemma.

Finally, we show that the sequence \eqref{eq-wreath-product} splits. 
Because $K_2$ has no center, a standard result \cite[ch.~2, thm.~7.11]{Su}
shows that the structure of $G_2$ is determined by the homomorphism
$G_2/K_2\to\Out(K_2)$.  Since $S$ is a nonabelian simple group
with trivial outer automorphism group, $\Out(K_2)=\Sym(\Omega)$.
Also, the homomorphism $\PSp(2g+4,\Z/3)\to\Sym(\Omega)$ is the
permutation action on $\Omega$.  Since there is exists a split
extension giving this homomorphism, and the homomorphism determines
$G_2$, $G_2$ must split.
\end{proof}

In the cases $g=0,1$, lemma~\ref{lem-simple-groups} no longer applies
because the groups $\PSp(2,\Z/2)\iso S_3$ and $\PSp(4,\Z/2)\iso S_6$
are not simple; they are extensions of $\Z/2$ by the simple group
$S'=[S,S]$.  One can describe the permutation representation of
$\pi_1(\P_b)$ on the fiber of $\H_{4,g}\to\P_b$ in a manner suitable
for computer calculation, and for $g=0$ we discovered
$|G_2|=3^{40}.2^{16}|\PSp(4,\Z/3)|$, so $K_2=3^{40}.2^{16}\sset
S_3^{40}$.  For $g=1$ the calculation exceeded our available computing
power, so we proceeded as follows.  An argument as in the proof of
theorem~\ref{thm-hurwitz-monodromy} shows that
$K_2':=K_2\cap\prod_\Omega S'$ is either the full direct product
$\prod_\Omega S'$ or is isomorphic to $S'$ and is diagonally embedded
in $\prod_\Omega S'$.  ($K_2'$ turns out to be the commutator subgroup
of $K_2$, justifying the notation.  This also holds in the $g=0$
case.)  A computer-aided calculation shows that $K_2'$ is the full direct
product $\prod_\Omega S'$.  The crucial step is an analogue of
lemma~\ref{lem-surjects-to-factors} for $K_2'$.  Namely, while $\b_i^3$
lies in $K_2$, it does not lie in $K_2'$ because transvections lie outside
$S'$.  Nonetheless, an explicit calculation shows that $[\b_1^3,\b_2^3]$ is
a non-trivial element of $K_2'$ which projects trivially to at least one
factor $S'$, hence $K_2'=\prod_\Omega S'$.  

As described below, we wrote down an explicit faithful permutation
representation of $G_2/K_2'$, which was within reach of computer
calculation.   We found that $G_2/K_2'$ is $2^{16}.\PSp(4,\Z/3)$ for $g=0$
and $2^{168}.\PSp(6,\Z/3)$ for $g=1$.  Although we already knew this when
$g=0$, in this representation we could show that  extension is not split,
which was out of reach before killing $K_2'$.  We did not apply sufficient
computing power to determine whether or not it splits for $g=1$.  We
carried out our computer calculations using GAP \cite{GAP}.

To describe our representation of $G_2/K_2'$ we recall from
\cite[Section~1]{EEHS} the (faithful) permutation representation of $G_2$
on the collection $\Sigma$ of $S_4$-orbits of $b$-tuples
$(\sigma_1,\ldots,\sigma_b)$ of 2-cycles in $S_4$ such that
$\sigma_1\cdots\sigma_b=1$ and
$\langle\sigma_1,\ldots,\sigma_b\rangle=S_4$.  Here, $\b_i$ acts by
replacing $\sigma_i$ by $\sigma_{i+1}$ and $\sigma_{i+1}$ by
$\sigma_{i+1}^{-1}\sigma_i\sigma_{i+1}$ and leaving all other
$\sigma_j$ invariant, and $S_4$ acts by simultaneous conjugation on
all elements of a $b$-tuple.

In a similar fashion we may identify $\Omega$ with the $S_3$-orbits of
$b$-tuples of 2-cycles in $S_3$ so that if we fix an isomorphism
$S_3\iso S_4/V_4$, then the induced map $\Sigma\to\Omega$ is
$G_2$-equivariant.  If we fix $\omega\in\Omega$ to be the point
corresponding to $C_6$ and write $\Sigma_\omega$ for the fiber over
$\omega$, then we may identify $\Sigma_\omega$ with $\PP V$ and
$S=\PSp(V)$ with the factor of $\prod_\Omega S$ over $\omega$.

If we write $H_2$ for the stabilizer in $G_2$ of $\omega$, then the
representation $G_2\to\Sym(\Omega)$ is equivalent to the left
representation of $G_2$ on $G_2/H_2$.  Moreover, if we write $H_2^*$
for the kernel of the composite map $H_2\to S\to\Z/2$, then $K_2'$ is
the intersection of all $G_2$-conjugates of $H_2^*$ and hence is the
kernel of the left representation of $G_2$ on $\Omega'=G_2/H_2^*$.  In
particular, given a set of coset representatives of $G_2/H_2^*$ and a
black box for identifying when two elements of $G_2$ lie in the same
coset, it is easy to compute the representation $G_2\to\Sym(\Omega')$:
$\b_i$ takes the coset $\alpha_jH_2^*$ to the coset
$\b_i\alpha_jH_2^*$.

To construct representatives one takes a known subset
$\alpha_1,\ldots,\alpha_m$, computes $\b_i\alpha_jH_2^*$ for
$i=1,\ldots,b-1$ and $j=1,\ldots,m$, adds any new cosets which arise to the
known subset, and repeats until no new cosets are constructed.

To construct the black box observe that the elements $\gamma_1,\gamma_2$
represent the same coset if and only if $\gamma=\gamma_1^{-1}\gamma_2$
lies in $H_2^*$, and the latter occurs if and only if $\gamma$ both
stabilizes $\omega$ and lies in the kernel of $H_2\to\Z/2$.  For $g=1$, the
parity map $S\simeq S_6\to\Z/2$ is not the restriction of the parity map
$\Sym(\Sigma_\omega)\to\Z/2$, hence a little work is required to determine
the former; a transvection in $S$ fixes $2^3-1$ lines and permutes the
other $2^4-2^3$ in pairs, hence is a product of four 2-cycles in
$\Sym(\Omega_\omega)$.  If we chose $\omega$ to correspond to the $b$-tuple
$(\sigma_1,\ldots,\sigma_b)$ with $\sigma_1=\sigma_2=(12)$ and
$\sigma_3=\cdots=\sigma_b=(23)$, then one can easily verify that
$\b_3,\ldots,\b_{b-1}$ stabilize $\omega$, they each act non-trivially on
$V$, and they generate $S$.  Since they generate $S$ and are conjugate,
they must all map to the nontrivial element of $\Z/2$ and the image of
$s\in S$ under $S\to\Z/2$ is the parity of the length of $s$ as a product
in $\b_3,\ldots,\b_{b-1}$.

\section{Proof of theorems~\ref{thm-2} and \ref{thm-GN}}
\label{sec-thm-2}

We first introduce a little notation for talking about $\V_N$.  Choosing a
point of $\H_{3,g+1}$ means choosing a simply branched cover $C_3\to\PP^1$,
or equivalently the associated Galois cover $C_6\to\PP^1$.  We define
$V_N(C_6)$ to be $H^1(C_3;\Z/N)$, or more intrinsically as the union of the
pullbacks to $H^1(C_6;\Z/N)$ of the three $H^1(C_3;\Z/N)$'s, modulo
identifications by the action of $\Z/3$.  For fixed
$(p_1,\dots,p_b)\in\P_b$, the fiber of $\V_N$ is $\oplus_{C_3}V_N(C_6)$,
where the sum extends over the points $C_3\in\H_{3,g+1}$ lying above
$(p_1,\dots,p_b)$.  When $N=2$, $V_2(C_6)$ is just $V(C_6)$ from
section~\ref{sec-thm-1}, giving the relation to the Hurwitz monodromy
problem.

Now we can discuss the monodromy.  The map $G\to\PSp(2g+4,\Z/3)$ is the
same as in the previous section, corresponding to the action on $\Omega=\PP
H^1(C_2;\Z/3)$.  As before, we write $K$ for the kernel, which acts on
$\V_N$ by a subgroup of $P_N:=\prod_\Omega\Sp(2g+2,\Z/N)$.  Also, we saw in
lemma~\ref{lem-F3-transvection} that $\b_1$ acts on $H^1(C_2;\Z/3)$ as a
transvection, so it distinguishes an element of $\Omega$.  We write $H$ for
the $G$-stabilizer of this point, $C_6$ for the corresponding $S_3$-cover
of $\PP^1$, and $V_N$ for $V_N(C_6)\iso(\Z/N)^{2g+2}$.

\begin{lemma}
\label{lem-foo-1}
If $g\geq 0$ and $N\geq 0$, then $H$ acts on $V_N$ as $\Sp(V_N)$.
\end{lemma}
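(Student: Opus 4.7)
The plan is to transplant the argument of Lemma~\ref{lem-F2-transvection} with $\Z/N$ replacing $\Z/2$, and then upgrade from ``image contains several transvections'' to ``image is all of $\Sp(V_N)$''. The authors already observed, inside the proof of Lemma~\ref{lem-F2-transvection}, that the topological portion of that argument works verbatim with $\Z$ coefficients, hence with $\Z/N$. So the first step is immediate: rereading the Dehn twist computation on the curves of Figure~\ref{fig3} with $\Z/N$ coefficients shows that $\b_1\in H$ acts trivially on $V_N$ and that $\b_3,\ldots,\b_{b-1}\in H$ each act on $V_N\iso(\Z/N)^{2g+2}$ as a symplectic transvection in an explicit primitive vector. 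This puts $b-3=2g+3$ transvections into $H$'s image.

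Next I would argue that these $2g+3$ transvections already generate $\Sp(V_N)$. Topologically, the $\b_i$-transvection vector is the class of the lift to $C_3$ of a simple arc in $\PP^1$ joining $p_i$ and $p_{i+1}$; consecutive arcs share an endpoint, so their lifts meet transversely, and hence consecutive transvection vectors pair nontrivially under the symplectic form on $V_N$. The resulting chain of $2g+3$ pairwise non-orthogonal primitive vectors is a Humphries-type generating configuration, and transvections in such a chain generate $\Sp(2g+2,\Z)$ by the classical theorem of Humphries; reducing mod $N$ gives $\Sp(V_N)$, and the case $N=0$ is Humphries' theorem directly.

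The main obstacle is making the second step rigorous, especially for composite $N$. A clean alternative when $N=p$ is an odd prime is to invoke the Zalesskii--Serezhkin (or McLaughlin) theorem that any irreducible subgroup of $\Sp(2m,\Z/p)$ containing a transvection equals the whole group; irreducibility is straightforward from the chain structure of transvection vectors. For general $N$ one reduces to prime powers via the Chinese remainder theorem and then lifts from $\Z/p$ to $\Z/p^n$ using that $\b_i^p$ yields transvections with $p$-divisible coefficient, which generate the congruence kernel of $\Sp(V_{p^n})\to\Sp(V_p)$.
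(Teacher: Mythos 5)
There is a genuine gap, and it sits at the heart of the lemma. Your first step (re-running the Dehn-twist computation of Lemma~\ref{lem-F2-transvection} with $\Z$ or $\Z/N$ coefficients to produce the chain of $2g+3$ transvections in $H$'s image) is fine and is exactly what the paper does. But your second step --- that transvections in a chain of $2g+3$ consecutively non-orthogonal primitive vectors generate $\Sp(2g+2,\Z)$ --- is false. Humphries' theorem concerns Dehn-twist generators of the mapping class group, not transvection generators of the symplectic group, and the symplectic statement fails precisely at the prime $2$: a chain of transvections is the $A$-type vanishing-cycle configuration, and A'Campo's theorem (the paper's actual tool) says the group it generates is the \emph{preimage in $\Sp(2g+2,\Z)$ of a proper subgroup of $\Sp(2g+2,\Z/2)$} (an orthogonal group for the quadratic form refining the symplectic form mod~$2$). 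It contains the level-$2$ congruence subgroup but is not everything; the paper even remarks that the extra $(2g+3)$rd generator does not enlarge this image. So your argument proves the lemma only for odd $N$, and fails for $N=0$ and $N$ even --- including the case $N=2$ that drives Theorem~\ref{thm-hurwitz-monodromy}.

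The missing idea is that $H$ is strictly larger than $\langle\beta_3,\dots,\beta_{b-1}\rangle$, and one must use the extra elements. The paper closes the gap at $2$ by combining A'Campo's level-$2$ congruence subgroup with the mod-$2$ surjectivity already established in Lemmas~\ref{lem-F2-transvection} and~\ref{lem-surjects-to-factors}; that surjectivity rests on Clebsch's transitivity theorem for $\H_{4,g}\to\P_b$, which supplies $H$-conjugates of the transvections in directions not reachable from the chain alone. Any correct proof needs some such input beyond the visible braid generators. A secondary issue: your fallback via Zalesskii--Serezhkin/McLaughlin is misstated --- the hypothesis is ``irreducible and \emph{generated by} transvections,'' and even then the conclusion is a short list that includes symmetric and orthogonal groups, which is exactly the case a chain configuration lands in, so the theorem as you invoke it would not rescue the even case either.
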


\begin{proof}
It suffices to prove this in the case $N=0$, i.e., with $\Z$ coefficients. 
A'Campo \cite[Thm.~1(2)]{ACampo} studied a particular representation of the
braid group $B_{\mu+1}$, $\mu$ even, into $\Sp(\mu,\Z)$.  We have a
representation of $B_{b-2}=\langle \beta_3,\dots,\beta_{b-1}\rangle\sset H$
into $\Sp(2g+2,\Z)$.  (Recall that $b=2g+6$.) In both cases the braid
generators act by transvections in primitive lattice vectors (this uses
lemma~\ref{lem-F2-transvection}, whose proof goes through perfectly well
over $\Z$).  These representations are essentially unique, since the
transvections in two nonproportional vectors braid if and only if pairing
the vectors yields $\pm1$.  Therefore our representation contains his, with
$\mu=2g+2$, and we even have an extra generator.  He proves that the image
of his representation contains the level-$2$ congruence subgroup of
$\Sp(2g+2,\Z)$, so the image of ours does too.  (One can show that our
extra generator doesn't enlarge the image of the representation.)

Since the image of $H$ contains the level-$2$ congruence subgroup of
$\Sp(2g+2,\Z)$, all we have to show is that $H$ surjects to
$\Sp(2g+2,\Z/2)$.  We did this in lemma~\ref{lem-surjects-to-factors}.
\end{proof}

\begin{lemma}
\label{lem-foo-2}
If $g\geq0$ and $3\nmid N$, then the projection of $K$ to any factor of
$P_N=\prod_\Omega\Sp(2g+2,\Z/N)$ is surjective.
\end{lemma}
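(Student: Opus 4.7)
The plan is to mimic the proof of lemma~\ref{lem-surjects-to-factors}, with $\Sp(2g+2,\Z/N)$ in place of $\PSp(2g+2,\Z/2)$. Because $G$ permutes the factors of $P_N$ transitively---this follows from the transitivity of $\PSp(2g+4,\Z/3)$ on $\Omega$, established in the proof of lemma~\ref{lem-F3-transvection} via Clebsch's theorem---it suffices to show that the image $M$ of $K$ in the distinguished factor $\Sp(V_N)$ is all of $\Sp(V_N)$.

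Since $K$ is normal in $G$, it is normal in $H$, and lemma~\ref{lem-foo-1} says $H$ surjects onto $\Sp(V_N)$, so $M$ is a normal subgroup of $\Sp(V_N)$. To exhibit a specific nontrivial element, observe that $\b_{b-1}$ acts on $H^1(C_2;\Z/3)$ as a symplectic transvection, which has order $3$ in $\PSp(2g+4,\Z/3)$; hence $\b_{b-1}^3\in K$. The proof of lemma~\ref{lem-F2-transvection} (which, as remarked there, works perfectly well with $\Z$ coefficients) shows that $\b_{b-1}$ acts on $V_N$ as a symplectic transvection $T_v$ in a primitive vector $v$, so $T_v^3\in M$.

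This is where the hypothesis $3\nmid N$ enters: $3$ is a unit in $\Z/N$, so $\langle T_v^3\rangle = \langle T_v\rangle$, giving $T_v\in M$. By normality of $M$ in $\Sp(V_N)$ and the transitivity of $\Sp(V_N)$ on primitive vectors of $V_N$, the group $M$ contains the transvection $T_w$ in every primitive vector $w$; since these transvections generate $\Sp(V_N)$, we conclude $M=\Sp(V_N)$.

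The main obstacle is justifying the two facts about $\Sp(2g+2,\Z/N)$ invoked in the final step: transitivity on primitive vectors, and generation by transvections in primitive vectors. Both hold classically over $\mathbb{F}_p$, extend to $\Z/p^k$ by standard Hensel-type lifting, and propagate to general $N$ coprime to $3$ via the Chinese remainder decomposition of $\Z/N$. Nothing in the argument requires $g>1$, so the conclusion holds for $g\geq 0$ as in the statement.
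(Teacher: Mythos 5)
Your proposal is correct and follows essentially the same route as the paper: reduce to one factor by transitivity of $G$ on $\Omega$, observe that the image of $K$ is normal in $\Sp(V_N)$ because $K\trianglelefteq H$ and $H$ surjects onto $\Sp(V_N)$ (lemma~\ref{lem-foo-1}), and recover a genuine transvection from $\b_{b-1}^3$ using the invertibility of $3$ mod $N$ (the paper takes the power $\b_{b-1}^{3m}$ with $3m\equiv1\pmod N$, which is the same device as your $\langle T_v^3\rangle=\langle T_v\rangle$). The concluding facts about conjugacy and generation by transvections are exactly what the paper also relies on.
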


\begin{proof}
Follow the proof of lemma~\ref{lem-surjects-to-factors}; the only
modification needed is that depending on one's definition of a
transvection, $\b_{b-1}^3$ might not be one (e.g.~if 3 is not a square in
$\Z/N$).  But regardless of this choice of definition, if $m\geq 1$
satisfies $3m\equiv 1\pmod{N}$, then the cyclic group $\b_{b-1}^3$
generates contains the transvection $\b_{b-1}^{3m}$.  
\end{proof}

\begin{proof}[Proof of theorem~\ref{thm-2}:]
It suffices to prove that $K$ surjects to $P_N$, and by the Chinese
remainder theorem it suffices to treat the case where $N$ is a prime power
$p^n$.  First we treat the case $N=p$.  Under our hypothesis on $g$,
$\PSp(2g+2,\Z/p)$ is a nonabelian simple group.  Then the argument for
theorem~\ref{thm-hurwitz-monodromy} implies that $K$ surjects to the
central quotient $\prod_\Omega\PSp(2g+2,\Z/p)$ of $P_p$.  Since
$\Sp(2g+2,\Z/p)$ is a nonsplit extension of $\PSp(2g+2,\Z/p)$, $K$ surjects
to $P_p$.  

Now we suppose $N=p^n$ for $n>1$.  We write $\G$ for the level $p^{n-1}$
congruence subgroup of $\Sp(2g+2,\Z/p^n)$ and assume inductively that $K$
surjects to $P_{p^{n-1}}$.  So we must show that $G_N\cap\prod_\Omega\G$ is
all of $\prod_\Omega\G$.  Now, $\G$ is an elementary abelian $p$-group, and
the action of $\Sp(2g+2,\Z/p^n)$ on it factors through $\Sp(2g+2,\Z/p)$,
whose action on $\G$ is equivalent to the adjoint action on
$\mathfrak{sp}(2g+2,\Z/p)$.  First we suppose $p>2$, so that this action is
irreducible.  Observe that the action of $P_p$ on $\prod_\Omega\G$ is by
the direct sum of $|\Omega|$ many distinct irreducible representations of
$P_p$.  Since $G_N$ surjects to $P_p$, $G_N\cap\prod_\Omega\G$ is an
invariant subspace, so it is the product of some of the factors of
$\prod_\Omega\G$.  It also surjects to each factor, by
lemma~\ref{lem-foo-2}, so it must be the product of all of them.  This
finishes the proof for $p\neq2$.

The same argument works for $p=2$, even though $\G$ is no longer
irreducible under $\Sp(2g+2,\Z/2)$.  The scalar matrix $1+2^{n-1}$ in
$\G\iso\mathfrak{sp}(2g+2,\Z/2)$ is invariant, the quotient by the span of
this vector is irreducible, and there is no invariant complement.  This
last property is key, because it implies that the only $P_p$-invariant
subspace of $\prod_\Omega\G$ that projects onto each factor is the whole
product.  So the argument still applies.
\end{proof}

Now we explain the application of theorem~\ref{thm-2} to Ellenberg's
question.  As in the previous section, we will indicate degrees of
covers of $\PP^1$ by subscripts.  Suppose $3\nmid N$ and
$C_{6N^2}\in\E_N$, i.e., $C_{6N^2}$ is a Galois cover of $\PP^1$ with
Galois group $X_N=N^2{:}S_3$ and $b$ branch points, and the small
loops around them permute the sheets by involutions in $X_N$ with
nontrivial image in $S_3$.  Then $b$ must be even because the product
of the $b$ loops in $\Z/2=S_3/3$ must be trivial.  Analogously to
section~\ref{sec-thm-1}, we define $C_6$ as $C_{6N^2}/N^2$, $C_2$ as
$C_6/3$ and the three $C_3$'s as the quotients of $C_6$ by the 3
involutions in $S_3$.  These are exactly the same covers we met in
section~\ref{sec-thm-1} and they fit into a diagram similar to
Figure~\ref{fig1}.

We define the projectivization $\PP V_N(C_6)$ as the set of direct summands
$\Z/N$ of $V_N(C_6)$.  The arguments of section~\ref{sec-thm-1}, with
$\Z/N$ in place of $\Z/2$, imply that $C_{6N^2}$ corresponds to an element
of $\PP V_N(C_6)$, and that the fiber of $\E_N\to\P_b$ over $p\in\P_b$ is
in bijection with the set of pairs $(C_6,C_{6N^2})$, where $C_6$
corresponds to an element of $\Omega=\PP H^1(C_2;\Z/3)\iso \PP^{b-3}(\Z/3)$
and $C_{6N^2}$ to an element of $\PP V_N(C_6)$.  That is, the fiber is
$\coprod_\Omega \PP^{b-5}(\Z/N)$.  It is clear that the action of $G$ on
this set is determined by its action on $\oplus_\Omega V(C_6)$, which is
exactly the fiber of $\V_N$.  Indeed, the action is given by projectivizing
the action on each summand, so the monodromy group $\Gbar_N$ is got from
\eqref{eq-foo} by replacing $\Sp$ by $\PSp$.  

\begin{proof}[Proof of theorem~\ref{thm-GN}:]
We have already explained why $\Gbar_N$ is the quotient of $G_N$ by the
center of $K_N=\prod_\Omega\Sp(b-4,\Z/N)$, so all we have to do is show
that the sequence splits.  By the Chinese remainder theorem, it suffices to
treat the case with $N$ a prime power $p^n$.  We appeal to a theorem of
Gross and Kov\'acs \cite[Cor.~4.4]{GK} which describes the structure of
extensions like \eqref{eq-GN} in terms of the stabilizer of one factor of
the product.  We fix $\w\in\Omega$ and let $\Hbar_N\sset \Gbar_N$ be its
stabilizer.  Their result asserts that \eqref{eq-GN} splits if and only if
$$
1
\to
\prod_{\w'\in\Omega}S\!\biggm/\!\prod_{\w'\neq\w}S
\to
\Hbar_N\!\!\biggm/\!\prod_{\w'\neq\w}S
\to
\Hbar_N\!\!\biggm/\!\prod_{\w'\in\Omega}S
\to
1,
$$
does, where $S=\PSp(b-4,\Z/p^n)$.  This sequence has the form
$$
1
\to
\PSp(b-4,\Z/p^n)
\to
\ ?\ 
\to
3{\cdot}3^{b-4}{:}\Sp(b-4,\Z/3)
\to
1,
$$
the right term being a maximal parabolic subgroup of $\PSp(b-2,\Z/3)$.
Since the left term is centerless, the structure of the extension
is given by the natural homomorphism from the right term to 
$\Out(S)$, which is solvable.  Since the right term is perfect,
this map is trivial, so the sequence splits, so \eqref{eq-GN} does too.

($\Out(S)$ is known exactly, cf. \cite{petechuk} for the case $b\geq10$ and
\cite{abe} for the case $b\geq6$ with $N$ odd.  But it is much easier to
see solvability than to work the group out exactly.)
\end{proof}

\begin{remark}
Since we know $\Gbar_N$, we recover the result of Biggers and Fried
\cite{BF} that $G$ is transitive on the fiber of $\E_N\to\P_b$, which is
the same as the irreducibility of $\E_N$.  On the other hand, when $N\neq0$
one could use their result to prove an analogue of lemma~\ref{lem-foo-1}
without relying on A'Campo's theorem.  Namely, $H$ acts on $\PP V$ as
$\PSp(V_N)$; one mimics the proof of lemma~\ref{lem-F2-transvection}, using
their transitivity result in place of Clebsch's.  One can then use this to
prove lemma~\ref{lem-foo-1} itself (for $N\neq0$).
\end{remark}

\end{document}